\documentclass[11pt, reqno]{amsart}

\hoffset -.5true cm

\usepackage{latexsym,fullpage}
\usepackage{amssymb}
\usepackage{amsfonts}
\usepackage{amsmath}
\usepackage{amsthm}
\usepackage[english]{babel}

\newtheorem{thm}{Theorem}[section]
\newtheorem{lem}[thm]{Lemma}
\newtheorem{cor}[thm]{Corollary}
\newtheorem{pro}[thm]{Proposition}

\theoremstyle{definition}

\newtheorem{exmp}[thm]{Example}

\newtheorem{rem}[thm]{Remark}

\numberwithin{equation}{section}

\setcounter{MaxMatrixCols}{90}

\def\ni{\noindent}

\def\H{{\text {\bf H}}}

\def\ra{\longrightarrow}

\def\pp{{\mathbb P}^2}

\def\ra{\rightarrow}

\def\ds{\displaystyle}
\def\Gin{{\rm Gin}}
\def\soc{{\rm soc}}
\def\FF{{\mathcal F}}
\def\GG{{\mathcal G}}
\def\pp{{\phantom{-}}}
\def\be{\bfseries\em}




\begin{document}


\title{Artinian level algebras of codimension $3$}

\author[J. Ahn] {Jeaman Ahn${}^1$}
\address{Department of Mathematics Education, Kongju National University, 182, Shinkwan-dong, Kongju, Chungnam 314-701, Republic of Korea}
\email{jeamanahn@kongju.ac.kr}
\thanks{${}^1$This research was supported by Basic Science Research Program through the National Research Foundation of Korea (NRF) funded by the Ministry of Education, Science, and Technology (No. 2010-0025762).}

\author[Y.S. SHIN]{Yong Su Shin${}^{2,*}$}
\address{Department of Mathematics, Sungshin Women's University, Seoul, Korea, 136-742}
\email{ysshin@sungshin.ac.kr }
\thanks{${}^2$This research was supported by a grant from Sungshin Women's University in 2009 (2009-2-21-003/1).}
\thanks{${}^*$Corresponding author}

\begin{abstract}
In this paper, we continue the study of which $h$-vectors $\H=(1,3,\dots, h_{d-1}, h_d, h_{d+1})$  can be the Hilbert function of a level algebra by investigating Artinian level algebras of codimension 3 with the condition  $\beta_{2,d+2}(I^{\rm lex})=\beta_{1,d+1}(I^{\rm lex})$, where $I^{\rm lex}$ is the lex-segment ideal associated with an ideal $I$. Our approach is to adopt an homological method called {\it Cancellation Principle}: the minimal free resolution of $I$ is obtained from that of $I^{\rm lex}$ by canceling some adjacent terms of the same shift.

 We prove that when $\beta_{1,d+2}(I^{\rm lex})=\beta_{2,d+2}(I^{\rm lex})$, $R/I$ can be an Artinian level $k$-algebra only if either $h_{d-1}<h_d<h_{d+1}$ or  $h_{d-1}=h_d=h_{d+1}=d+1$ holds. We also
show that for $\H=(1,3,\dots, h_{d-1}, h_d, h_{d+1})$, the Hilbert function of an Artinian algebra of codimension $3$ with the condition  $h_{d-1}=h_d<h_{d+1}$, \begin{itemize}
 \item[(a)] if $h_d\leq 3d+2$, then $h$-vector $\H$ cannot be level, and 
 \item[(b)] if $h_d\geq 3d+3$, then there is a level algebra with Hilbert function $\H$ for some value of $h_{d+1}$.
\end{itemize} 

\end{abstract}

\keywords{Hilbert functions, Level $O$-sequences, Artinian Level algebras, Reduction numbers, Generic initial ideals, Graded Betti numbers}
\subjclass[2010]{Primary:13P40; Secondary:14M10}

\maketitle

\section{Introduction}

Let $R=k[x_1,\dots,x_n]$ be an $n$-variable polynomial ring over a field $k$ of characteristic zero, and $I$ be a homogeneous ideal of $R$. The numerical function
$$
\H_{R/I}(t):= \dim _k R_t - \dim _k I_t
$$
is called the {\em Hilbert function} of  the ring $R/I$. 

Recall that if $n$ and $i$ are positive integers, then $n$ can be written uniquely in the form  $$
n_{(i)} =\binom{n_i}{i} + \binom{n_{i-1}}{i-1}+\cdots+\binom{n_j}{j} ,
$$
where $n_i>n_{i-1}>\cdots >n_j\ge j\ge 1$ (see Lemma 4.2.6, \cite{BH}).

Following \cite{BG}, we define, for any integers $a$ and $b$,
$$
\big(n_{(i)}\big)^a_b =\binom{n_i+a}{i+b} + \binom{n_{i-1}+a}{i-1+b}+\cdots+\binom{n_j+a}{j+b} 
$$
where $\binom{m}{n}=0$ for either $m<n$ or $n<0$. 

Let $\H=(h_0, h_1,\dots,h_i,\dots)$ be a sequence of non-negative integers. We say that $\H$ is an $O$-sequence if $h_0=1$ and $h_{i+1}\le ((h_i)_{(i)})^1_1$ for all $i\ge 1$. Given an $O$-sequence $\H=(h_0,h_1,\dots)$, we define the {\em first difference} of $\H$ as 
$$
\Delta \H=(h_0,h_1-h_0,h_2-h_1,h_3-h_2,\dots).
$$

If $A=R/I$ is an {\em Artinian $k$-algebra}, then we associate the graded algebra $A=k\oplus A_1\oplus \cdots \oplus A_s$, $(A_s\ne 0)$ with a vector of nonnegative integers, which is an $(s+1)$-tuple, called the {\em$h$-vector} of $A$ and denoted by 
$$
\H_A:=\H=(h_0,h_1,\dots,h_s),
$$
where $h_i=\dim_k A_i$. We call $s$ the {\em socle degree} of $A$. The {\em socle} of $A$
is defined to be the annihilator of the maximal homogeneous ideal, namely
$$
{\rm Ann}_A(m):=\{a\in A \mid ma=0\} \text{ where } m=\sum_{i=1}^s A_i. 
$$


Let $\FF$ be the graded minimal free resolution of an homogeneous ideal $I\subset R$, i.e.,
$$
\begin{array}{ccccccccccccccccccccccccccccccccccccccccccc}
\FF: & 0 & \ra & \FF_{n-1} & \ra & \FF_{n-2} & \ra & \cdots
     & \ra & \FF_1 & \ra & \FF_0& \ra & I & \ra & 0,
\end{array}
$$
where
$
\FF_i
 =  \bigoplus^{\gamma_{i}}_{j=1} R^{\beta_{i,j}}(-j).
$
The numbers $j$ are called the {\em shifts} associated to $I$, and the numbers $\beta_{i,j}$ are called the {\em graded Betti numbers} of $I$. When we need to emphasize the ideal $I$, we shall use $\beta_{i,j}(I)$ for $\beta_{i,j}$.

An algebra $A$ is called  an {\em Artinian  level algebra} if the last module $\mathcal F_{n-1}$ in the minimal free resolution of $A$ is of the form $R(-s)^a$, where $s$ and $a$ are positive integers. We also say that a numerical sequence $\H = 
(h_0 , h_1 , \dots, h_{s-1},  h_s)$ is a {\em level $O$-sequence} if there is an Artinian level algebra $A$ with the Hilbert function $\H$. 

As for the level $O$-sequence, an interesting question is how to determine if a given numerical sequence is a level $O$-sequence. A great deal of research has been conducted with the aim of answering to this question (see e.g., \cite{AM, AS,  BI, BG, BL, BZ:1, BZ:2, CI, GHMS, GHS:4, GL, I-K, M, Za1, Za2}). In particular, there is an excellent broad overview of level algebras in the memoir \cite{GHMS}. Despite this, it is sometimes distressingly difficult to find ones with specific desired properties, and several interesting problems are still open.


In \cite{AS}, we proved that an Artinian algebra with Hilbert function $\H = 
(1 ,3, h_2 , \dots, h_{d-1}, h_d, h_{d+1} )$ with the condition  $h_{d-1} > h_d = h_{d+1}$ cannot be level if $h_d \leq 2d + 3$, and proved that if $h_d \geq 2d + 4$ then there is a level O- 
sequence of codimension 3 with Hilbert function $\H$ for some value of $h_{d-1}$. 
To prove the result, we used the cancellation principle saying that the minimal free resolution of $I$ is obtained from that of either $\Gin(I)$ or $I^{\rm lex}$ by canceling some adjacent terms of the same shift, where $\Gin(I)$ is the generic initial ideal of $I$ with respect to the reverse lexicographic order and $I^{\rm lex}$ is the lex-segment ideal associated with an ideal $I$ (see \cite{G}, \cite{P}). 

By the cancellation principle, one knows that $\H=(1, 3,\dots,h_s)$ cannot be a level $O$-sequence if  $\beta_{1,d+2}(\Gin(I))<\beta_{2,d+2}(\Gin(I))$ or $\beta_{1,d+2}(I^{\rm lex})<\beta_{2,d+2}(I^{\rm lex})$ for some $d<s$. However, the problem that we wish to solve is to determine whether a given $h$-vector can be a level $O$-sequence with the condition  $\beta_{1,d+2}(\Gin(I))=\beta_{2,d+2}(\Gin(I))$ or $\beta_{1,d+2}(I^{\rm lex})=\beta_{2,d+2}(I^{\rm lex})$. In this case, it is known that an Artinian algebra $A=R/I$ of codimension 3 with Hilbert function $\H=(1,3,\dots,h_s)$ cannot be a level algebra (Theorem 3.14, \cite{AS}) if 
\begin{itemize}
   \item[(a)] $\beta_{1,d+2}(\Gin(I))=\beta_{2,d+2}(\Gin(I))$  for some $d<s$, or
   \item[(b)] $\beta_{1,d+2}(I^{\rm lex})=\beta_{2,d+2}(I^{\rm lex})$ with the condition  $h_{d-1}>h_d=h_{d+1}$ for some $d<s$.
\end{itemize}

From this result, we wish to determine what Hilbert functions can be an Artinian level $O$-sequences with the condition 
\begin{equation}\label{eq:20110324}
\beta_{1,d+2}(I^{\rm lex})=\beta_{2,d+2}(I^{\rm lex}) \text{ for some } d<s.
\end{equation} 

We first prove that $R/I$ can be an Artinian level $k$-algebra only if either $h_{d-1}<h_d<h_{d+1}$ with $\Delta h_{d}=\Delta h_{d+1}$, or  $h_{d-1}=h_d=h_{d+1}=d+1$ 
with the condition (\ref{eq:20110324}) (see Theorem~\ref{thm:4} and  Corollary~\ref{T:20101110-404}). Using these results, we also
prove that for $\H=(1,3,\dots, h_{d-1}, h_d, h_{d+1})$, the Hilbert function of an Artinian algebra of codimension $3$ with the condition  $h_{d-1}=h_d<h_{d+1}$, 
\begin{itemize}
 \item[(a)] if $h_d\leq 3d+2$, then $h$-vector $\H$ cannot be level, and 
 \item[(b)] if $h_d\geq 3d+3$, then there is a level algebra with Hilbert function $\H$ for some value of $h_{d+1}$.
\end{itemize}

In Section 2, we introduce some preliminary results and background materials which will be used throughout the remaining part of the paper. 
In Section 3, we make use of {\it cancellation in resolutions} to study Artinian level algebras of codimension 3 with the condition (\ref{eq:20110324}). Finally, Section 4 is devoted to investigate Artinian level or non-level algebras with the condition $h_{d-1}=h_d<h_{d+1}$. 

We use a computer program CoCoA \cite{RABCP} to build some of examples (e.g., Examples~\ref{EX:20101110-401} and \ref{EX:20101110-408}), with  the fact that a differentiable $O$-sequence can always be a truncation of an Artinian Gorenstein $O$-sequence (see \cite{GHMS, GHS:2, GHS:4, GKS, GPS, GS:1, Ha:1, Ha:2}).

\section{Background and Preliminary Results}

In this section, we introduce some important results and recall some results of Macaulay, Green, and Stanley.



\begin{thm}[\cite{G1}, Chapter 5 in \cite{KR}]\label{T:201}
Let $L$ be a general linear form in $R$ and we denote by $h_d$ the degree $d$ entry of the Hilbert function of $R/I$ and $\ell_d$ the degree $d$ entry of the Hilbert function of $R/(I,L)$. Then, we have the following inequalities.
\begin{enumerate}
\item [(a)] {\rm Macaulay's Theorem:} $h_{d+1} \leq \left((h_d)_{(d)}\right)^{1}_{1}$.
\item [(b)] {\rm Green's Hyperplane Restriction Theorem:} $\ell_d \leq \left((h_{d})_{(d)}\right)^{-1}_{\phantom{-}0}.$
\end{enumerate}
\end{thm}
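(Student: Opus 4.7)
\medskip
\noindent\textbf{Proof proposal.} Since both bounds are classical, my plan is to reduce each statement to a combinatorial assertion about a distinguished monomial ideal (the lex-segment for (a); a strongly stable ideal for (b)), for which the inequality becomes a direct binomial count. The underlying principle is that passing from $I$ to the generic initial ideal $\Gin(I)$ in the reverse lexicographic order, and subsequently to the associated lex-segment ideal $I^{\rm lex}$, preserves the Hilbert function while producing ideals with a transparent combinatorial structure; this reduction is due to Galligo and Macaulay and is the engine behind both parts.

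For part (a), I first invoke Macaulay's theorem on Hilbert functions to replace $I$ by $I^{\rm lex}$, which satisfies $\H_{R/I^{\rm lex}}(t)=\H_{R/I}(t)$ for all $t$, so it suffices to prove the growth bound for the lex-segment ideal. Writing $h_d$ in its Macaulay expansion
$$
h_d = \binom{n_d}{d}+\binom{n_{d-1}}{d-1}+\cdots+\binom{n_j}{j},
$$
the standard monomials of $(R/I^{\rm lex})_d$ are precisely the lex-smallest $h_d$ monomials of degree $d$, and their grouping reflects this expansion. Multiplying each by $x_1,\dots,x_n$ and counting the distinct monomials produced in degree $d+1$ (carefully avoiding double-counting along the boundaries of the expansion) yields exactly $\big((h_d)_{(d)}\big)^1_1$, which is the desired upper bound.

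For part (b), after a generic change of coordinates I may take $L=x_n$, and by the Galligo-type argument the Hilbert function of $(R/(I,L))_d$ is bounded above by that of $(R/(J,x_n))_d$ where $J=\Gin(I)$ with respect to the reverse lexicographic order. Since $J$ is strongly stable, the monomials of $(R/(J,x_n))_d$ correspond to monomials of degree $d$ in $k[x_1,\dots,x_{n-1}]$ that are not in $J$. The strong stability forces the monomials divisible by $x_n$ to come "last" in the lex order, so among the $h_d$ standard monomials of degree $d$, the subset lying in $k[x_1,\dots,x_{n-1}]$ is the lex-initial piece. Reindexing the Macaulay expansion by shifting each binomial $\binom{n_i}{i}$ to $\binom{n_i-1}{i}$ translates this count into $\big((h_d)_{(d)}\big)^{-1}_{\phantom{-}0}$, which gives (b).

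The main obstacle is justifying the two reductions rigorously. For (a), one must verify that the lex-segment produces the largest possible growth among all graded ideals with the given Hilbert function; for (b), one must verify that hyperplane-cutting by a generic linear form is suitably modeled by setting $x_n=0$ in $\Gin_{\rm rlex}(I)$, and that the stable property yields the claimed lex-initial structure. Once these reductions are in place the binomial identities are routine bookkeeping, and I would consult \cite{G1} and Chapter~5 of \cite{KR} for the standard treatment.
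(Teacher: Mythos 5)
The paper does not prove this statement: Theorem~\ref{T:201} is quoted as a classical result of Macaulay and Green, with the proof deferred entirely to \cite{G1} and Chapter~5 of \cite{KR}. So there is no in-paper argument to compare yours against; what can be assessed is whether your sketch is a sound outline of the standard proofs, and on that score there are two problems.

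For (a), your opening reduction, ``invoke Macaulay's theorem to replace $I$ by $I^{\rm lex}$,'' assumes the hard content of the theorem being proved: the assertion that the degreewise lex segments with $\dim_k L_d=\dim_k I_d$ form an ideal with the same Hilbert function \emph{is} Macaulay's theorem, and once it is granted the growth bound is the easy corollary (since $R_1L_d\subseteq L_{d+1}$ and the size of $R_1\cdot(\text{lex segment})$ is the binomial bookkeeping you describe). The combinatorial lemma that lex segments minimize $\dim R_1V$ among subspaces $V\subseteq R_d$ of fixed dimension is precisely what you defer. For (b), the structural claim on which your count rests is false: for a strongly stable ideal $J$ the standard monomials of degree $d$ do not form a lex segment, and the subset lying in $k[x_1,\dots,x_{n-1}]$ is not their lex-initial piece. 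Take $J=(x_1^2,x_1x_2)$ in three variables: the degree-$2$ standard monomials are $x_1x_3,\ x_2^2,\ x_2x_3,\ x_3^2$, and the unique one not divisible by $x_3$ is $x_2^2$, not the lex-largest one $x_1x_3$. The inequality $\ell_d\le ((h_d)_{(d)})^{-1}_{\phantom{-}0}$ for Borel-fixed ideals is exactly the nontrivial core of Green's theorem and needs the double induction on the number of variables and the degree (or a compression argument); it is not routine bookkeeping once the $\Gin$ reduction is made. (Also, by Proposition~\ref{P:20101107-211} the passage to $(\Gin(I),x_n)$ gives an \emph{equality} of Hilbert functions, not merely an upper bound.) In short, both reductions you flag as ``the main obstacle'' are the theorems themselves; as written the proposal is a reading guide to the cited references rather than a proof.
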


For any homogeneous ideal $I$ of $R=k[x_1,\dots,x_n]$, note that the Hilbert function does not change by passing to $\Gin(I)$ or $I^{\rm lex}$, and we have
$$
\beta_{q,i}(I) \le \beta_{q,i}(\Gin(I)) \le \beta_{q,i}(I^{\rm lex})
$$
\text{ (see \cite{AM, Bi, G, Hul, Pa})}. In particular, if $\beta_{q,i}(\Gin(I))=0$ or $\beta_{q,i}(I^{\rm lex})=0$, then $\beta_{q,i}(I)=0$.

In \cite{HT}, they introduced the {\em $s$-reduction number} $r_s(R/I)$ of $R/I$ and have shown the following lemma.

\begin{lem}[\cite{AM, HT}]\label{Reduction Number_2}
For a homogeneous ideal $I$ of $R$ and for $s\geq \dim(R/I)$, the $s$-reduction number $r_s(R/I)$ is given by 
\begin{align*}
r_s(R/I) = &~\min\{\ell \mid \textup{Hilbert function of }R/(I+J) \textup{ vanishes in degree }\ell+1\}\\
            = &~\min\{\ell \mid x_{n-s}^{\ell+1}\in \Gin(I)\}\\
            = &~ r_s(R/\Gin(I))
\end{align*}
where $J$ is generated by $s$ general linear forms of $R$.
\end{lem}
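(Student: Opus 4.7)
The plan is to reduce all three quantities to invariants of the reverse lexicographic generic initial ideal $\Gin(I)$ and then finish by a short combinatorial argument using the Borel-fixed property. The central identity I would invoke is that for general linear forms $\ell_1,\ldots,\ell_s$,
\[
\H_{R/(I+(\ell_1,\ldots,\ell_s))}(t)=\H_{R/(\Gin(I)+(x_{n-s+1},\ldots,x_n))}(t)
\]
for every $t$. This is the classical Bayer--Stillman/Galligo property of the reverse lex Gin: iterating the fact that for a general linear form $\ell$ one has $\Gin(I+(\ell))=\Gin(I)+(x_n)$ (after an appropriate generic coordinate change) yields the displayed equality. Granting this, the equality $r_s(R/I)=r_s(R/\Gin(I))$ follows instantly, since both reduction numbers are determined by the Hilbert functions of the respective Artinian quotients by $s$ general linear forms, and Hilbert functions are preserved under $\Gin$.

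It remains to show that the Hilbert function of $R/(\Gin(I)+(x_{n-s+1},\ldots,x_n))$ vanishes in degree $\ell+1$ precisely when $x_{n-s}^{\ell+1}\in\Gin(I)$. Writing $J:=\Gin(I)$, the quotient is isomorphic to $k[x_1,\ldots,x_{n-s}]/\overline{J}$, where $\overline{J}$ is the ideal generated by the monomials in $J$ supported on $x_1,\ldots,x_{n-s}$; this quotient has zero degree-$(\ell+1)$ part exactly when every monomial of degree $\ell+1$ in $x_1,\ldots,x_{n-s}$ lies in $J$. The direction $(\Leftarrow)$ is trivial, since $x_{n-s}^{\ell+1}$ is itself one such monomial. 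For $(\Rightarrow)$, given $x_{n-s}^{\ell+1}\in J$ and any monomial $m=x_{i_1}\cdots x_{i_{\ell+1}}$ with $i_k\le n-s$, I would iterate the Borel-fixed exchange moves (replacing a factor $x_j$ by $x_i$ for $i<j$) to produce $m$ from $x_{n-s}^{\ell+1}$; hence $m\in J$.

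The main obstacle, and the only nontrivial input, is the first identity relating a generic linear section of $I$ to a coordinate section of $\Gin(I)$ by the last $s$ variables. It is a classical property of the reverse lex order, but it requires the flat-family behavior under a generic coordinate change and the precise compatibility of $\Gin$ with quotient by a general linear form; these are the facts one must either cite or redo. Once this input is available, the remainder is a direct application of the strongly stable combinatorics of $\Gin(I)$.
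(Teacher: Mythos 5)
The paper offers no proof of this lemma: it is quoted verbatim from the cited references (Hoa--Trung and Ahn--Migliore), so there is nothing internal to compare against. Your argument is correct and is essentially the standard proof from those sources: iterate Green's $\Gin(I+(H))/(H)=(\Gin(I)+(x_n))/(x_n)$ (the paper's Proposition~\ref{P:20101107-211}) to identify the Hilbert function of the general Artinian reduction with that of $R/(\Gin(I)+(x_{n-s+1},\dots,x_n))$, then use strong stability of $\Gin(I)$ (valid here since $\operatorname{char}k=0$) to reduce the vanishing condition to $x_{n-s}^{\ell+1}\in\Gin(I)$. Two small points worth tightening: the hypothesis $s\ge\dim(R/I)$ is what guarantees the minimum exists (i.e., that the reduction is actually Artinian), and the equality $r_s(R/I)=r_s(R/\Gin(I))$ is cleanest if you note that the monomial criterion applied to $\Gin(I)$ involves $\Gin(\Gin(I))=\Gin(I)$, rather than appealing only to preservation of Hilbert functions, since a priori the general linear sections of $I$ and of $\Gin(I)$ need not have equal Hilbert functions without invoking the displayed identity for both ideals.
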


Now we continue to introduce some lemmas and theorems that will be used to prove the main results of this paper. 

\begin{lem}[Lemma 3.2, \cite{AS}] \label{L:006}
Let $A=R/I$ be an Artinian algebra and let $L$ be a general
linear form. Suppose that $\dim_k((I:L)/I)_d > (n-1)\dim_k((I:L)/I)_{d+1}
$ for some $d>0$. Then $A$ has a socle element in degree $d$.
\end{lem}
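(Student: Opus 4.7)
The plan is to produce a nonzero socle element of degree $d$ by a dimension-counting (pigeonhole) argument applied to the multiplication maps on $(I:L)/I$ by the remaining linear forms. First I would set up the right object: for any $\bar f\in ((I:L)/I)_d$ and any linear form $\ell\in R_1$, since $L(\ell f)=\ell(Lf)\in I$ we get $\ell f\in (I:L)$, so multiplication by $\ell$ defines a well-defined $k$-linear map $((I:L)/I)_d\to ((I:L)/I)_{d+1}$.

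Next, using that $L$ is general, I would perform a linear change of coordinates so that $x_1,\dots,x_{n-1},L$ form a basis of $R_1$, and then assemble the $n-1$ multiplication maps into
$$
\Phi\colon ((I:L)/I)_d\longrightarrow ((I:L)/I)_{d+1}^{\,n-1},\qquad \bar f\longmapsto (\overline{x_1 f},\dots,\overline{x_{n-1}f}).
$$
By the hypothesis $\dim_k((I:L)/I)_d>(n-1)\dim_k((I:L)/I)_{d+1}$, the target has strictly smaller $k$-dimension, so $\ker\Phi\neq 0$ and there exists $\bar f\neq 0$ in $((I:L)/I)_d$ with $\Phi(\bar f)=0$.

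Finally, I would verify that this $\bar f$ is a socle element of $A$ in degree $d$. The condition $\Phi(\bar f)=0$ unravels to $x_i f\in I$ for $i=1,\dots,n-1$ (it says $\overline{x_i f}=0$ in $(I:L)_{d+1}/I_{d+1}$, which is exactly $x_i f\in I_{d+1}$). Combined with $Lf\in I$ (which holds because $\bar f\in ((I:L)/I)_d$) and the fact that $\{x_1,\dots,x_{n-1},L\}$ spans $R_1$, we conclude $\ell f\in I$ for every $\ell\in R_1$. Hence $\bar f$ is annihilated by the maximal ideal $m$ of $A$; since $\bar f\neq 0$ in $((I:L)/I)_d\subseteq A_d$, this is a nonzero socle element in degree $d$.

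The only delicate points are (i) correctly identifying $((I:L)/I)_d$ as $\ker(\mu_L\colon A_d\to A_{d+1})$ and checking that each $x_i$-multiplication preserves this subspace (both immediate from the definitions), and (ii) using the genericity of $L$ to replace $L$ by a coordinate so that the remaining $n-1$ coordinates together with $L$ exhaust a basis of $R_1$, which is what makes the pigeonhole count with the factor $n-1$ sharp. No further homological machinery is needed; the argument is essentially one application of the rank-nullity theorem.
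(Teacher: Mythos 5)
Your proof is correct and is essentially the same argument as the one given for Lemma 3.2 in the cited source [AS]: assemble the multiplications by the $n-1$ linear forms completing $L$ to a basis of $R_1$ into a single map out of $((I:L)/I)_d$, note it cannot be injective under the stated dimension inequality, and observe that a nonzero kernel element is killed by all of $R_1$, hence is a socle element in degree $d$. The only cosmetic remark is that the genericity of $L$ plays no role in this argument (any nonzero linear form extends to a basis); it is present in the statement only because of how the lemma is applied.
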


We denote by $\mathcal
G(I)$ the set of minimal (monomial) generators of $I$ and
$\mathcal G(I)_d$ the elements of $\mathcal G(I)$ having degree
$d$. For a monomial $T=x_1^{a_1}\cdots x_n^{a_n}\in R$, define
$$
m(T):=\max\{ j \mid a_j>0\}. 
$$

\begin{thm}[Eliahou-Kervaire, \cite{EK}]\label{T:012}
Let $I$ be a stable monomial ideal of $R$. Then we have
$$
\beta_{q,i}(I)=\sum_{T\in \mathcal
G(I)_{i-q}}\binom{m(T)-1}{q}.
$$
\end{thm}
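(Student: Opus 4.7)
The plan is to construct the Eliahou--Kervaire resolution explicitly and then read off the stated Betti numbers from its minimality. The key combinatorial fact I would establish first is the \emph{admissible decomposition}: because $I$ is stable, every monomial $u \in I$ can be written uniquely as $u = x_j \cdot T$ with $T \in \mathcal G(I)$ and $j \le m(T)$. This uniqueness is the only place where stability is really used, and it is the engine that drives everything afterwards.

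Next I would define, for each $q \ge 0$, a free $R$-module $\mathcal F_q$ with basis symbols $e(T;S)$ indexed by pairs $(T,S)$ with $T \in \mathcal G(I)$ and $S \subseteq \{1,\dots,m(T)-1\}$, $|S|=q$, and I would place $e(T;S)$ in internal degree $\deg(T) + |S|$. Simply counting basis elements of internal degree $i$ gives
$$
\operatorname{rank} (\mathcal F_q)_i \;=\; \sum_{T \in \mathcal G(I)_{i-q}} \binom{m(T)-1}{q},
$$
which is exactly the formula for $\beta_{q,i}(I)$ provided the resulting complex is a \emph{minimal} free resolution of $I$. The differential $\partial : \mathcal F_q \to \mathcal F_{q-1}$ is then defined on each generator in Koszul style: writing $S = \{j_1 < \cdots < j_q\}$,
$$
\partial e(T;S) \;=\; \sum_{k=1}^{q} (-1)^k \bigl( x_{j_k}\, e(T; S\setminus\{j_k\}) \;-\; x_{j'_k}\, e(T'_k; S'_k) \bigr),
$$
where the correction term $x_{j'_k}\, e(T'_k; S'_k)$ is obtained by applying the admissible decomposition to the monomial $x_{j_k} T$ whenever it is not itself a minimal generator.

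Minimality of this complex is the easy half: by construction every nonzero entry of $\partial$ is either $\pm x_{j_k}$ or a variable produced by an admissible decomposition, so all entries lie in the maximal ideal $(x_1,\dots,x_n)$. The serious obstacle is \emph{exactness}, and this is the step I expect to be the most delicate. The cleanest route I would pursue is to build an explicit contracting chain homotopy $s$ defined on monomials by again using the admissible decomposition: if $u = x_j \cdot T$ is admissible with $j \le m(T)$, lift $u$ to the basis element $e(T;\emptyset)$ (times $x_j$), and extend by descent on the lex order of the monomials involved. One then verifies $\partial s + s \partial = \mathrm{id}$ in positive homological degrees, which proves acyclicity. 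An alternative that avoids the somewhat fiddly homotopy computation is an induction on $|\mathcal G(I)|$: remove the lex-largest generator to form a subideal $I'$, splice the short exact sequence $0 \to I' \to I \to I/I' \to 0$ with the Eliahou--Kervaire resolutions of $I'$ and of the principal-like quotient $I/I'$, and build $\mathcal F_\bullet$ as a mapping cone; the counting identity $\binom{m-1}{q} = \binom{m-2}{q} + \binom{m-2}{q-1}$ is exactly what makes the ranks match up, and minimality of the cone follows from the previous paragraph.

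Once exactness is in hand the Betti number formula is immediate, since in a minimal resolution $\beta_{q,i}(I) = \operatorname{rank}(\mathcal F_q)_i$, and the right-hand side was computed combinatorially from the indexing set. The subtle conceptual point worth highlighting in the write-up is that stability is used in exactly two places: once to guarantee the admissible decomposition exists (and is unique), and once to ensure the correction terms $x_{j'_k} e(T'_k; S'_k)$ remain inside the complex.
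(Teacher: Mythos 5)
First, note that the paper does not prove this theorem: it is quoted from Eliahou--Kervaire \cite{EK} as a known result, so there is no in-paper argument to compare yours against. Your outline is the standard construction of the Eliahou--Kervaire resolution, and the parts you actually carry out are fine: the indexing of the basis elements $e(T;S)$ with $S\subseteq\{1,\dots,m(T)-1\}$, the resulting rank count $\sum_{T\in\mathcal G(I)_{i-q}}\binom{m(T)-1}{q}$ in internal degree $i$, and the observation that minimality is automatic once every matrix entry lies in the maximal ideal.

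There are, however, two genuine problems. The first is that your ``admissible decomposition'' is stated with the inequality reversed, and in that form uniqueness is false. The Eliahou--Kervaire lemma says that every monomial $w\in I$ factors uniquely as $w=T\cdot y$ with $T\in\mathcal G(I)$ and $m(T)\le \min(y)$ (the largest variable of the generator does not exceed the smallest variable of the cofactor), and the cofactor $y$ is an arbitrary monomial, not a single variable. With your condition $j\le m(T)$ uniqueness fails: for $I=(x_1^2,x_1x_2,x_2^2)$ the monomial $x_1x_2^2$ admits both $x_1\cdot(x_2^2)$ and $x_2\cdot(x_1x_2)$ satisfying $j\le m(T)$, whereas only $(x_1x_2)\cdot x_2$ satisfies the correct condition. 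Since you rightly call this uniqueness ``the engine that drives everything,'' the error propagates: the correction terms of your differential are not well defined as written, and their coefficients are in general monomials of degree $\ge 1$ rather than single variables (for $I=(x_1,x_2^3)$ and $T=x_2^3$, the canonical decomposition of $x_1x_2^3$ is $x_1\cdot x_2^3$, with cofactor of degree $3$). The second problem is that exactness --- which is the entire content of the theorem once the ranks have been counted --- is only gestured at: you name a contracting homotopy and a mapping-cone induction but carry out neither, and for the mapping-cone route you would still need to order the generators so that each colon ideal $(u_1,\dots,u_{j-1}):u_j$ equals $(x_1,\dots,x_{m(u_j)-1})$ and then check minimality of the comparison maps. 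Both routes do work (the second is essentially the iterated mapping-cone argument for ideals with linear quotients), but as written the proposal is an outline of a proof rather than a proof.
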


\begin{lem}[Lemma 3.8, \cite{AS}] \label{L:013}
Let $J$ be a stable ideal of $R$. Then we have
\begin{align*}
\dim_k \left((J:x_n)/J\right)_{d-1} =  \big|\{T \in \mathcal G(J)_{d}
\mid x_n\text{ divides } T \}\big|.
\end{align*}
\end{lem}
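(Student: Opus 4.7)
The plan is to set up an explicit bijection between a monomial basis of $\left((J:x_n)/J\right)_{d-1}$ and the set $\{T \in \mathcal G(J)_d \mid x_n \text{ divides } T\}$, from which the claimed dimension formula is immediate.

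First I would recall that because $J$ is a monomial ideal, $(J:x_n)/J$ admits, in each degree, a $k$-basis consisting of monomials. In degree $d-1$ this basis is precisely $\{T : \deg T = d-1,\ T \notin J,\ x_n T \in J\}$. The map I propose is $\phi(T) = x_n T$, and injectivity is transparent. The argument then splits into two claims: (i) every such $\phi(T)$ is in fact a \emph{minimal} monomial generator of $J$; (ii) every $U \in \mathcal G(J)_d$ divisible by $x_n$ arises as $\phi(T)$ for a unique $T$.

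The main obstacle is claim (i); this is where the stability of $J$ is used essentially. Suppose, for contradiction, that $\phi(T) = x_n T$ is not minimal; then some minimal generator $U \in \mathcal G(J)$ properly divides $x_n T$. If $x_n \nmid U$, then $U \mid T$, which forces $T \in J$ and yields a contradiction. Otherwise I would write $U = x_n W$ with $W$ a proper divisor of $T$; since $m(U) = n$, stability guarantees $x_i W \in J$ for every $i < n$, hence for every $i \le n$. Picking any variable $x_i$ dividing $T/W$ and rewriting $T = (x_i W)\bigl(T/(x_i W)\bigr)$ then exhibits $T$ as a multiple of an element of $J$, contradicting $T \notin J$.

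Claim (ii) is the easier surjectivity check: given $U \in \mathcal G(J)_d$ with $x_n \mid U$, set $T = U/x_n$; then $\deg T = d-1$ and $x_n T = U \in J$, while $T \notin J$ because any minimal generator dividing $T$ would also divide $U$ with strictly smaller degree, violating the minimality of $U$. Combining (i) and (ii) produces the bijection, and hence the asserted equality of dimensions.
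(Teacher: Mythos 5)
Your proof is correct. The paper quotes this lemma from \cite{AS} without reproducing an argument, and your bijection $T \mapsto x_nT$ between the monomial basis of $\left((J:x_n)/J\right)_{d-1}$ and the generators in $\mathcal G(J)_d$ divisible by $x_n$ is the standard proof, with stability invoked exactly where it is essential — namely to exclude a proper divisor $U = x_nW \in \mathcal G(J)$ of $x_nT$, since then $x_iW \in J$ for all $i \le n$ forces $T \in J$.
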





%
%

We now recall the well known result in \cite{G}, from which the generic initial ideal with respect to the degree reverse lexicographic order is extremely well-suited to the quotient by general linear forms.

\begin{pro} [Corollary 2.15, \cite{G}] \label{P:20101107-211}
Consider the degree reverse lexicographic order on the monomials of $R=k[x_1,\dots,x_n]$. Let $I$ be a homogeneous ideal in $R$ and $H$ be a general linear form
in $R$. Then
$$
\Gin(I+(H)/(H))=(\Gin(I)+(x_n))/(x_n).
$$
\end{pro}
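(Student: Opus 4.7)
The strategy is to combine a Gröbner-basis lemma special to the reverse lexicographic order with a genericity argument for the change of coordinates. The crux is the following lemma: for any homogeneous ideal $J \subset R$,
\[
\operatorname{in}(J + (x_n)) \;=\; \operatorname{in}(J) + (x_n)
\]
in the degree reverse lexicographic order. I would prove it using the observation that in reverse lex, whenever $x_n$ divides $\operatorname{in}(f)$ for a homogeneous $f$, $x_n$ divides \emph{every} monomial of $f$: an $x_n$-free term in $f$ would force the last nonzero entry of its exponent-difference vector with $\operatorname{in}(f)$ to be positive, so it would be strictly larger than the leading term, a contradiction. Given this, if $\{g_1,\ldots,g_r\}$ is a Gröbner basis of $J$, then $\{g_1,\ldots,g_r, x_n\}$ is a Gröbner basis of $J + (x_n)$: each new $S$-pair $S(g_i, x_n)$ reduces to zero either by Buchberger's product criterion (when $\gcd(\operatorname{in}(g_i), x_n) = 1$) or trivially (when $x_n \mid \operatorname{in}(g_i)$, since then $g_i = x_n \tilde g_i$ and the $S$-polynomial is a visible multiple of $x_n$).

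Next I would handle the genericity. Because $H$ is general, I may pick a generic $\phi \in GL(R)$ in the Galligo open set $U$ satisfying $\operatorname{in}(\phi(I)) = \Gin(I)$ and simply define $H := \phi^{-1}(x_n)$; as $\phi$ varies over $U$, the resulting $H$'s trace out a Zariski-dense subset of $R_1$, so it suffices to verify the proposition for $H$ of this form. To compute $\Gin$ in the quotient, I identify $R/(H) \cong R/(x_n)$ via $\phi$, pick a generic $\bar\psi \in GL(R/(x_n))$, and lift to $\psi \in GL(R)$ with $\psi(x_n) = x_n$. The composition $\psi\phi$ lies in the coset $P\phi$, where $P \subset GL(R)$ is the parabolic of coordinate changes fixing $x_n$; this coset meets $U$ nontrivially at $\phi$ itself (i.e.\ at $\psi = e$), so $U \cap P\phi$ is Zariski-open and nonempty in the irreducible variety $P\phi$, hence dense. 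Therefore $\operatorname{in}(\psi\phi(I)) = \Gin(I)$ for generic $\bar\psi$.

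Combining, I apply the lemma to $J := \psi\phi(I)$ to get
\[
\operatorname{in}(\psi\phi(I) + (x_n)) \;=\; \Gin(I) + (x_n),
\]
and passing to $R/(x_n)$ yields $\Gin((I + (H))/(H)) = (\Gin(I) + (x_n))/(x_n)$, as claimed. The main obstacle lies in the genericity reasoning of the second step, specifically the claim that the parabolic coset $P\phi$ meets the Galligo open set in a dense subset. This is resolved by the observation that the intersection is automatically nonempty (containing $\phi$ itself), and density follows from the irreducibility of $P$ together with the Zariski topology. Once this is established, the remainder of the proof is formal given the Gröbner basis lemma of the first step.
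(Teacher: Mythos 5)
The paper does not actually prove this proposition: it is imported verbatim as Corollary~2.15 of Green's notes on generic initial ideals, so there is no internal argument to compare yours against. Your proof is essentially the standard one (and essentially Green's own), and it is correct. Both pillars are the right ones: the revlex-specific fact that an $x_n$-free monomial beats every $x_n$-divisible monomial of the same degree --- which yields both the divisibility observation and your Gr\"obner lemma $\operatorname{in}(J+(x_n))=\operatorname{in}(J)+(x_n)$ --- and the coset/irreducibility argument reconciling genericity in $GL_{n-1}$ with genericity in $GL_n$. Two steps deserve to be made explicit rather than absorbed into ``passing to $R/(x_n)$'': first, for an ideal $K\supseteq(x_n)$ you need $\operatorname{in}_{R/(x_n)}(K/(x_n))=\operatorname{in}_R(K)/(x_n)$, which again uses that revlex on $R$ restricts to revlex on $k[x_1,\dots,x_{n-1}]$ and that a Gr\"obner basis of $K$ may be taken of the form $\{x_n,g_1,\dots,g_r\}$ with the $g_i$ free of $x_n$ (reduce any Gr\"obner basis modulo $x_n$; the $x_n$-free leading terms survive precisely because of the same revlex property); second, the final $\bar\psi$ must lie simultaneously in the Galligo open set of $GL_{n-1}$ for the quotient ideal and in the dense open set produced by your coset argument, which is harmless since both are dense open in the irreducible variety $GL_{n-1}$, but it should be said. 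Neither point is a substantive gap.
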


\begin{rem} \label{R:20101107-212} Let $I$ be  a homogeneous ideal  of $R=k[x_1,\dots,x_n]$ and $L$ be a general linear form in $R$. Using Proposition~\ref{P:20101107-211} and the exact sequence
$$
\begin{matrix}
0 & \rightarrow & R/(I:L)(-1) & \overset{\times L}{\rightarrow} & R/I & \rightarrow & R/(I,L) & \ra &  0,
\end{matrix}
$$
we have  
$$
\begin{array}{rllllllllllllllllllll}
\dim_k(I:L)_t 
& = & \dim_k{(\Gin(I):x_n)}_t \\
\H(R/(I,L),t)
& = & \H(R/(\Gin(I),x_n),t) \\
\dim_k((I:L)/I)_t 
& = & \dim_k{((\Gin(I):x_n)/\Gin(I))}_t
\end{array}
$$
for $t\ge 0$.
\end{rem}


\begin{rem}\label{R:20101107-213} 
Let $I^{\rm lex}$ be the lex-segment ideal associated with a homogeneous ideal $I$ in $R=k[x_1,\ldots, x_n]$ and $L$ be a general linear form in $R$. Then, by Theorem~2.4,  \cite{ERV}, we have the following equality
$$
\H(R/(I^{\rm lex},L),d)=(\H(R/I^{\rm lex},d)_{(d)})^{-1}_{\phantom{-}0}.
$$
In this case, we may assume that $x_n$ is general with respect to $I^{\rm lex}$. Indeed, for $d\ge 1$, we have 
$$
\begin{array}{lllllllllllllllllllllll}
(\H(R/I,d)_{(d)})^{-1}_{\phantom{-}0}
& = & (\H(R/I^{\rm lex},d)_{(d)})^{-1}_{\phantom{-}0} \\
& = & \H(R/(I^{\rm lex},L),d) & (\text{by Theorem~2.4, \cite{ERV}})\\
& = & \H(R/(\Gin(I^{\rm lex}),x_n),d) & (\text{by Proposition~\ref{P:20101107-211} and Remark~\ref{R:20101107-212}})\\
& = & \H(R/(I^{\rm lex},x_n),d) & (\text{by Lemma~2.3, \cite{C}}).
\end{array}
$$
\end{rem}


The following lemma shows that we can write some of Betti numbers of the lex-segment ideal associated with a height three ideal $I$ with respect to binomial expansion of the Hilbert function.

\begin{lem} \label{L:20101104-208}
Let $A=R/I$ be an Artinian  ring of codimension $3$ with Hilbert function $\H=(h_0,h_1,\dots,h_s)$. Suppose that $h_d < \binom{2+d}{2}$. Then, we have 
\begin{itemize}
 \item[(a)] $\beta_{2,d+2}(I^{\rm lex})=h_{d-1}-h_{d}+((h_{d})_{(d)})^{-1}_{\pp 0}$.
 \item[(b)]  $\beta_{1,d+2}(I^{\rm lex})={((h_d)}_{(d)})^{1}_{1}+h_d-2h_{d+1}+((h_{d+1})_{(d+1)})^{-1}_{\pp 0}$.
\end{itemize}
\end{lem}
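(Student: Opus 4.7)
Both formulas will come from applying the Eliahou--Kervaire formula (Theorem~\ref{T:012}) to the stable ideal $I^{\rm lex}$ after sorting its minimal monomial generators by the value of $m(\cdot)$. For each degree $e$, set $a_j^{(e)} := \big|\{T \in \mathcal G(I^{\rm lex})_e \mid m(T) = j\}\big|$ for $j \in \{1,2,3\}$. Since $R$ has only three variables, Theorem~\ref{T:012} specializes to
\[
\beta_{2,d+2}(I^{\rm lex}) = a_3^{(d)}, \qquad \beta_{1,d+2}(I^{\rm lex}) = a_2^{(d+1)} + 2\,a_3^{(d+1)},
\]
so the task reduces to expressing $a_3^{(d)}$, $a_3^{(d+1)}$, and $a_2^{(d+1)}$ in terms of the Hilbert function.

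For (a), I would invoke Lemma~\ref{L:013} to rewrite $a_3^{(d)} = \dim_k\!\big((I^{\rm lex}{:}x_3)/I^{\rm lex}\big)_{d-1}$ and then chase the short exact sequence $0 \to R/(I^{\rm lex}{:}x_3)(-1) \overset{x_3}{\to} R/I^{\rm lex} \to R/(I^{\rm lex},x_3) \to 0$ in degree $d$. Combining this with Remark~\ref{R:20101107-213}, which gives $\dim_k(R/(I^{\rm lex},x_3))_d = ((h_d)_{(d)})^{-1}_{\pp 0}$, yields the formula for $\beta_{2,d+2}(I^{\rm lex})$ at once.

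For (b), the identical colon-sequence argument one degree higher produces $a_3^{(d+1)} = h_d - h_{d+1} + ((h_{d+1})_{(d+1)})^{-1}_{\pp 0}$. To compute $a_2^{(d+1)}$ I would reduce modulo $x_3$: set $\bar I := (I^{\rm lex},x_3)/(x_3) \subset k[x_1,x_2]$. The minimal generators of $\bar I$ are in bijection with those minimal generators $T$ of $I^{\rm lex}$ with $m(T) \le 2$, so $\mu_{d+1}(\bar I) = a_1^{(d+1)} + a_2^{(d+1)}$. The hypothesis $h_d < \binom{d+2}{2}$ forces $I^{\rm lex}_d \neq 0$, hence $x_1^d \in I^{\rm lex}$ by the lex-segment structure, so $x_1^{d+1}$ cannot be a new minimal generator and $a_1^{(d+1)} = 0$. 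Using Remark~\ref{R:20101107-213} to write $\dim_k \bar I_e = (e+1) - ((h_e)_{(e)})^{-1}_{\pp 0}$, together with the elementary observation that multiplying a nonzero lex-segment in $k[x_1,x_2]_d$ by $(x_1,x_2)$ raises its dimension by exactly one, I would obtain $a_2^{(d+1)} = ((h_d)_{(d)})^{-1}_{\pp 0} - ((h_{d+1})_{(d+1)})^{-1}_{\pp 0}$.

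Assembling the three counts reduces (b) to the binomial identity
\[
((h_d)_{(d)})^{1}_{1} - ((h_d)_{(d)})^{-1}_{\pp 0} = h_d.
\]
This is the main obstacle, and it is precisely the step that uses $h_d < \binom{d+2}{2}$. I would handle it by noting that this hypothesis forces $n_d \le d+1$ in the Macaulay expansion $h_d = \sum_k \binom{n_k}{k}$, so by the strict decrease $n_k \le k+1$ for every term, and then checking termwise that $\binom{n_k+1}{k+1} - \binom{n_k-1}{k} = \binom{n_k}{k}$ in the only two cases $n_k \in \{k,k+1\}$; summation then yields the identity. The identity genuinely fails when any $n_k \ge k+2$ (e.g.\ $h_d = \binom{d+2}{2}$), so the hypothesis cannot be dropped.
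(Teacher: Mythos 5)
Your argument is correct. For part (a) you follow exactly the paper's route: Eliahou--Kervaire reduces $\beta_{2,d+2}(I^{\rm lex})$ to counting degree-$d$ generators with $m(T)=3$, Lemma~\ref{L:013} converts this to $\dim_k((I^{\rm lex}:x_3)/I^{\rm lex})_{d-1}$, and the four-term exact sequence together with Remark~\ref{R:20101107-213} finishes. For part (b) you genuinely diverge. The paper groups the Eliahou--Kervaire sum as $a_2^{(d+1)}+2a_3^{(d+1)}=|\mathcal G(I^{\rm lex})_{d+1}|+a_3^{(d+1)}$ (using $a_1^{(d+1)}=0$, as you also observe) and then plugs in the standard count $\beta_{0,d+1}(I^{\rm lex})=((h_d)_{(d)})^{1}_{1}-h_{d+1}$ for the total number of new lex generators, which makes the term $((h_d)_{(d)})^{1}_{1}$ appear immediately and requires no further identity. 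You instead compute $a_2^{(d+1)}$ on its own by passing to $\bar I\subset k[x_1,x_2]$, getting $a_2^{(d+1)}=((h_d)_{(d)})^{-1}_{\pp 0}-((h_{d+1})_{(d+1)})^{-1}_{\pp 0}$, and then must convert $((h_d)_{(d)})^{-1}_{\pp 0}+h_d$ into $((h_d)_{(d)})^{1}_{1}$ via the identity $((h_d)_{(d)})^{1}_{1}-((h_d)_{(d)})^{-1}_{\pp 0}=h_d$; your termwise verification (all $n_k\le k+1$ because $n_d\le d+1$ and the $n_k$ strictly decrease) is sound, and in fact this identity is precisely the paper's Lemma~\ref{L:20101110-406}, proved there in Section~4 for the range $d<c<\binom{d+2}{2}$ and easily checked for $c\le d$ as well. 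The trade-off: the paper's version is shorter because it leans on the known maximal-generator count for lex ideals, while yours is more self-contained at the generator level but pays for it with the binomial identity; both correctly locate $h_d<\binom{2+d}{2}$ as the hypothesis that makes the bookkeeping (no generator $x_1^{d+1}$, and the identity itself) go through.
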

\begin{proof}
(a) From the following exact sequence
$$
0 \rightarrow  ( (I^{\rm lex}:x_3) /I^{\rm lex} )_{d-1} \rightarrow
(R/I^{\rm lex})_{d-1} \stackrel{\times x_3}{\longrightarrow} (R/I^{\rm lex})_{d}
\rightarrow (R/(I^{\rm lex}, x_3))_{d} \rightarrow 0,
$$
we have 
\begin{equation}\label{EQ:20101103-202}
\begin{array}{llllllllllll}
 \beta_{2, d+2}(I^{\rm lex}) 
 & = & {\ds \sum}_{T\in \mathcal
G(I^{\rm lex})_{d}}\binom{m(T)-1}{2} & (\text{by Theorem~\ref{T:012}})\\[1.5ex]
& = & \dim( (I^{\rm lex}:x_3) /I^{\rm lex} )_{d-1} & (\text{by Lemma~\ref{L:013}} )\\
& = & h_{d-1}-h_{d} + (h_{d})^{-1}_{\pp 0} & (\text{by Remark~\ref{R:20101107-213}})
\end{array}
\end{equation}
as we wished.

\medskip

\ni
(b) Since $I^{\rm lex}$ is a lex-segment ideal associated with an ideal $I$ of $R$, we see that $$\beta_{0, d+1}(I^{\rm lex})={((h_d)}_{(d)})^{1}_{1}-h_{d+1}.$$ Let $\mathcal G(I^{\rm lex})_{d+1}$ be the set of minimal generators of $I^{\rm lex}$ in degree $d+1$. Then, 
$$
\begin{array}{lllllllllllllll}
\beta_{1,d+2}(I^{\rm lex})
& = & {\ds \sum}_{T\in \mathcal G(I^{\rm lex})_{d+1},\,m(T)=2}{ {\binom{1}{1}}}+{\ds \sum}_{T\in \mathcal G(I^{\rm lex})_{d+1},\,m(T)=3}{ {\binom{2}{1}}} & (\text{by Theorem~\ref{T:012}})\\[2ex] 
& = & {\ds \sum}_{T\in \mathcal G(I^{\rm lex})_{d+1},\,m(T)=2}{ {\binom{1}{1}}}+2 \Big[{\ds \sum}_{T\in \mathcal G(I^{\rm lex})_{d+1},\,m(T)=3}{ {\binom{1}{1}}}\Big] \\[2ex] 
& = & \Big[{\ds \sum}_{T\in \mathcal G(I^{\rm lex})_{d+1},\,m(T)=2}{ {\binom{1}{1}}}+{\ds \sum}_{T\in \mathcal G(I^{\rm lex})_{d+1},\,m(T)=3}{ {\binom{1}{1}}}\Big] +\\[2ex] 
&  & {\ds \sum}_{T\in \mathcal G(I^{\rm lex})_{d+1},\,m(T)=3}{ {\binom{1}{1}}} \\[2ex]
& = & \big|\mathcal G(I^{\rm lex})_{d+1}\big| + \big|\{T\in \mathcal G(J)_{d+1}
\mid \,x_3\text{ divides } T\, \}\big|\\[1.5ex]
&   & \big(\text{since } h_d < \binom{2+d}{2}, \ x_1^{d+1}\notin \mathcal G(I^{\rm lex})_{d+1} \big)\\[1ex]

& = & \big|\mathcal G(I^{\rm lex})_{d+1}\big| +\dim_k ((I^{\rm lex}:x_3)/I^{\rm lex})_d & 
(\text{by Lemma~\ref{L:013}})\\[1ex]
& = & \big({((h_d)}_{(d)})^{1}_{1}-h_{d+1}\big)+\beta_{2,d+3}(I^{\rm lex}) & (\text{by equation~\eqref{EQ:20101103-202}})\\[.5ex]
& = & {((h_d)}_{(d)})^{1}_{1}-h_{d+1}+h_d-h_{d+1}+((h_{d+1})_{(d+1)})^{-1}_{\pp0} & (\text{by Lemma~\ref{L:20101104-208} (a)}) \\
& = & {((h_d)}_{(d)})^{1}_{1}-2h_{d+1}+h_d+((h_{d+1})_{(d+1)})^{-1}_{\pp0}  ,
\end{array}
$$
as we wanted to prove.
\end{proof}

\section{$O$-sequences with the Condition on $\beta_{1,d+2}(I^{\rm lex})=\beta_{2,d+2}(I^{\rm lex})$}  

First, we investigate  if some Artinian $O$-sequence with the condition 
$$
\beta_{1,d+2}(I^{\rm lex})=\beta_{2,d+2}(I^{\rm lex})
$$
is level. 

\begin{lem}\label{L:20101104-209}
Let  $A=R/I$ be an Artinian  ring of codimension $3$ with Hilbert function $\H=(h_0,h_1,\dots,h_s)$. Suppose that for some $d<s$,
\begin{itemize}
 \item[(a)] $\beta_{1,d+2}(I^{\rm lex})=\beta_{2,d+2}(I^{\rm lex})>0$, and 
 \item[(b)] $\beta_{2,d+3}(I^{\rm lex})>0$.
\end{itemize}  
Then $A$ is not level.
\end{lem}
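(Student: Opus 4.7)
The plan is to argue by contradiction, combining the Cancellation Principle with Theorem~3.14 of \cite{AS} and the socle criterion in Lemma~\ref{L:006}. Suppose $A=R/I$ is level with socle degree $s>d$. Then levelness forces $\beta_{2,j}(I)=0$ for every $j<s+3$, and in particular $\beta_{2,d+2}(I)=\beta_{2,d+3}(I)=0$.

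First I would exploit hypothesis~(a) to obtain a rigidity statement at shift $d+2$. By the Cancellation Principle, the resolution of $I$ is obtained from that of $I^{\rm lex}$ by canceling adjacent pairs of summands of the same shift. Since $\beta_{2,d+2}(I^{\rm lex})=b>0$ must cancel down to zero, exactly $b$ cancellations have to occur between homological positions $1$ and $2$ at shift $d+2$, each of which simultaneously removes one unit from $\beta_{1,d+2}$. Because $\beta_{1,d+2}(I^{\rm lex})=b$ by hypothesis~(a), these cancellations consume the entire $\beta_{1,d+2}$-term and prevent any $(0,1)$-cancellation at shift $d+2$; hence $\beta_{1,d+2}(I)=0$ and $\beta_{0,d+2}(I)=\beta_{0,d+2}(I^{\rm lex})$.

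Next I would transfer this rigidity from $I$ to $\Gin(I)$. The alternating sum $\beta_{0,j}-\beta_{1,j}+\beta_{2,j}$ depends only on the Hilbert function and is therefore the same for $I$, $\Gin(I)$, and $I^{\rm lex}$; combined with the sandwich $\beta_{0,d+2}(I)\le\beta_{0,d+2}(\Gin(I))\le\beta_{0,d+2}(I^{\rm lex})$ and the previous step this forces $\beta_{0,d+2}(\Gin(I))=\beta_{0,d+2}(I^{\rm lex})$, and therefore $\beta_{1,d+2}(\Gin(I))=\beta_{2,d+2}(\Gin(I))$. When $\beta_{2,d+2}(\Gin(I))>0$, Theorem~3.14(a) of \cite{AS} applies directly and yields the contradiction that $A$ is not level. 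The remaining case $\beta_{2,d+2}(\Gin(I))=0$ forces $\beta_{1,d+2}(\Gin(I))=0$, and then by Eliahou--Kervaire (Theorem~\ref{T:012}) $\Gin(I)$ has no minimal generators in degree $d+1$ with $m(T)\ge 2$, so $\beta_{2,d+3}(\Gin(I))=0$ as well; via Remark~\ref{R:20101107-212} and Lemma~\ref{L:013} one gets $\dim_k((I:L)/I)_{d-1}=\dim_k((I:L)/I)_d=0$.

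In this last case I would invoke hypothesis~(b), i.e.\ $\beta_{2,d+3}(I^{\rm lex})>0$, which by Lemma~\ref{L:20101104-208} reads $((h_{d+1})_{(d+1)})^{-1}_{\pp 0}>h_{d+1}-h_d$: the lex-segment ideal has a strictly larger hyperplane section than $I$ in degree $d+1$. Combined with the vanishing $\dim_k((I:L)/I)_d=0$ and the first-difference analysis of $\{\dim_k((I:L)/I)_t\}$ forced by the Borel-fixed structure of $\Gin(I)$, this discrepancy should ultimately trigger the socle criterion in Lemma~\ref{L:006} at some degree strictly less than $s$, contradicting levelness. The hard part will be precisely this final step, i.e.\ converting the $I^{\rm lex}$-level hypothesis~(b) into an honest socle element of $A$ when $\beta_{2,d+2}(\Gin(I))=0$; the Borel-fixed structure of $\Gin(I)$ together with the degree-$(d+1)$ rigidity established above should be the key technical input.
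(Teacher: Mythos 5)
Your reduction to the generic initial ideal is sound as far as it goes: the cancellation argument at shift $d+2$ correctly yields $\beta_{1,d+2}(I)=0$ and $\beta_{0,d+2}(I)=\beta_{0,d+2}(I^{\rm lex})$, and the alternating-sum/sandwich argument then gives $\beta_{1,d+2}(\Gin(I))=\beta_{2,d+2}(\Gin(I))$, so the case $\beta_{2,d+2}(\Gin(I))>0$ is indeed disposed of by Theorem~3.14(a) of \cite{AS}. But the remaining case $\beta_{2,d+2}(\Gin(I))=0$ is exactly where the content of the lemma lies, and there your argument stops at ``this discrepancy should ultimately trigger the socle criterion.'' Nothing you have established forces that. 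In that case you have shown $\dim_k((I:L)/I)_{d-1}=\dim_k((I:L)/I)_{d}=0$, so Lemma~\ref{L:006} cannot fire at degrees $d-1$ or $d$, and the inequality $\beta_{2,d+3}(I^{\rm lex})>\beta_{2,d+3}(\Gin(I))=0$ is by itself unremarkable: lex-segment ideals routinely have strictly larger Betti numbers, and a positive $\beta_{2,d+3}(I^{\rm lex})$ can in general be cancelled against a positive $\beta_{1,d+3}(I^{\rm lex})$ arising from minimal generators of $I$ in degree $d+2$. So hypothesis~(b) is never actually converted into a socle element, and the proof is incomplete.

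The missing idea is the truncation $\bar I=(I_{\le d+1})$. The paper runs your shift-$(d+2)$ rigidity argument on $\bar I$ rather than on $I$; since $\bar I$ has no minimal generators in degree $d+2$, the conclusion $\beta_{0,d+2}(\bar I^{\rm lex})=\beta_{0,d+2}(\bar I)$ forces $\beta_{0,d+2}(\bar I^{\rm lex})=0$, whence by Eliahou--Kervaire $\beta_{1,d+3}(\bar I^{\rm lex})=0$. Now the positive $\beta_{2,d+3}(\bar I^{\rm lex})=\beta_{2,d+3}(I^{\rm lex})$ has nothing adjacent to cancel against, so $\beta_{2,d+3}(\bar I)>0$ and $R/\bar I$ has a socle element in degree $d$; since $I$ and $\bar I$ agree through degree $d+1$, so does $A$, giving the contradiction. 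This is the step that actually uses hypothesis~(b), and it is precisely the step your $\Gin(I)$-based route cannot reach, because for the untruncated ideal $\beta_{0,d+2}(I^{\rm lex})$ need not vanish. To repair your proof you would have to either import this truncation or supply an independent argument in the case $\beta_{2,d+2}(\Gin(I))=0$.
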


\begin{proof}
Assume that there exists an Artinian level algebra $A$ with Hilbert function $\H$, and let $\bar I=(I_{\le d+1})$ and $\bar A=R/\bar I$.  Then we have   
$$
\begin{array}{lllllllllllllll}
\beta_{1,d+2}(\bar I^{\rm lex})=\beta_{1,d+2}(I^{\rm lex}), \\  
\beta_{2,d+2}(\bar I^{\rm lex})=\beta_{2,d+2}(I^{\rm lex}), & \text{and}\\  
\beta_{2,d+3}(\bar I^{\rm lex})=\beta_{2,d+3}(I^{\rm lex}).
\end{array}
$$
Hence, the assumption $\beta_{1,d+2}(I^{\rm lex})= \beta_{2,d+2}(I^{\rm lex})$ and $\beta_{2,d+3}(I^{\rm lex})>0$ implies that
\begin{align}\label{eq:2001}
\beta_{1,d+2}(\bar I^{\rm lex})&=\beta_{2,d+2}(\bar I^{\rm lex}), \quad \text{and} \\
\beta_{2,\,d+3}(\bar I^{\rm lex})&=\beta_{2,\,d+3}(I^{\rm lex})>0  .
\end{align}
Since $A$ is level and $I_t=(\bar I)_t$ for every $t\le d+1$, 
\begin{equation} \label{EQ:006} 
0 =   \beta_{2,d+2}(I) = \dim_k \soc(A)_{d-1} = \dim_k \soc(\bar A)_{d-1} = \beta_{2,d+2}(\bar I).
\end{equation}

Furthermore, using Lemma 2.9 in \cite{AS},  we have the following equality
$$
\beta_{1,d+2}(\bar I^{\rm lex})- \beta_{1,d+2}(\bar I)
  = [\beta_{0,d+2}(\bar I^{\rm lex})- \beta_{0,d+2}(\bar I)]+
[\beta_{2,d+2}(\bar I^{\rm lex})- \beta_{2,d+2}(\bar I)].
$$
Hence it follows from equations~\eqref{eq:2001} and \eqref{EQ:006} that  
\[- \beta_{1,d+2}(\bar I)
=\beta_{0,d+2}(\bar I^{\rm lex})- \beta_{0,d+2}(\bar I)\geq 0,
\]
which means that $\beta_{0,d+2}(\bar I^{\rm lex})= \beta_{0,d+2}(\bar I)=0$ since $\bar I$ is generated in degree $d+1$. This concludes from Theorem~\ref{T:012} that 
$$\beta_{0,d+2}(\bar I^{\rm lex})=
\beta_{1,d+3}(\bar I^{\rm lex})=0.$$ 

In other words, any cancellation on shifts is  impossible in the last free module of the minimal free resolution of $R/\bar I^{\rm lex}$ in degree $d$, and thus we have that $\beta_{2,\,d+3}(\bar I^{\rm lex})=\beta_{2,\,d+3}(\bar I)>0$. Hence $\bar A$ has a socle element in degree $d$, and so does $A$, which is a contradiction, as we wanted. 
\end{proof}

\begin{exmp} Consider an Artinian $O$-sequence $\H=(1, 3, 6, 10, 15, 16, 18)$. Then the minimal free resolution of $R/I^{\rm lex}$ with Hilbert function is 
$$
\begin{array}{lllllllllllllllllll}
0 & \ra &  R^2(-{\mathbf 7})\oplus R(-{\mathbf 8})\oplus R^{18}(-9) & \ra &  R^6(-6)\oplus R^2(-{\mathbf 7})\oplus R^{39}(-8) \\
& \ra &  R^5(-5)\oplus R(-6)\oplus R^{21}(-7) & \ra &  R \ \ra \ R/I^{\rm lex} \ \ra \ 0. 
\end{array}
$$
Then 
$$
\beta_{2,7}(I^{\rm lex}) = \beta_{1,7}(I^{\rm lex}) = 2 \quad \text{and} \quad
\beta_{2,8}(I^{\rm lex})=1.
$$
By Lemma~\ref{L:20101104-209},  any Artinian ring with Hilbert function $\H$ cannot be a level algebra. 
\end{exmp}


\begin{thm} \label{thm:4} Let $A=R/I$ be an Artinian ring of codimension 3 with Hilbert function $\H=(h_0,h_1,\dots,h_{d-1},h_d,h_{d+1})$. Suppose that
$$\beta_{1,d+2}(I^{\rm lex})=\beta_{2,d+2}(I^{\rm lex})>0 \text{ for some } d<s.$$
If $A$ is level, then 
\begin{itemize}
\item[(a)] $h_{d-1}= h_d = h_{d+1}=d+1$, or 
\item[(b)] $h_{d-1}<h_d<h_{d+1}$.
\end{itemize}
\end{thm}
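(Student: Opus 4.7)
The plan is to suppose that $A=R/I$ is level (in addition to the standing hypothesis $\beta_{1,d+2}(I^{\rm lex})=\beta_{2,d+2}(I^{\rm lex})>0$) and to trace the structural consequences. The starting point is the contrapositive of Lemma~\ref{L:20101104-209}: since $A$ is level, the equal Betti numbers force $\beta_{2,d+3}(I^{\rm lex})=0$. Applying Lemma~\ref{L:20101104-208}(a) with $d$ replaced by $d+1$ rewrites this vanishing as the numerical identity
\[
h_{d+1}-h_d \;=\; ((h_{d+1})_{(d+1)})^{-1}_{\pp 0} \;\ge\; 0,
\]
so $h_{d+1}\ge h_d$, and the proof will split on whether this inequality is strict.

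Assume first that $h_{d+1}>h_d$. I would work with a general linear form $L$ and the short exact sequence $0\to R/(I:L)(-1)\to R/I\to R/(I,L)\to 0$. Plugging the identity above into Green's Theorem (Theorem~\ref{T:201}(b)) forces the Green bound $\H(R/(I,L),d+1)\le((h_{d+1})_{(d+1)})^{-1}_{\pp 0}$ to be an equality, so in the exact sequence in degree $d+1$ I obtain $(I:L)_d=I_d$, i.e.\ $\dim_k((I:L)/I)_d=0$. Because $A$ is level and $d-1<s$, no socle element sits in degree $d-1$; Lemma~\ref{L:006} then yields $\dim_k((I:L)/I)_{d-1}\le 2\dim_k((I:L)/I)_d=0$. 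Reading the same exact sequence in degree $d$ gives $\dim_k(R/(I,L))_d=h_d-h_{d-1}\ge 0$. If equality held, then $R/(I,L)$ would vanish in all degrees $\ge d$, which contradicts $\dim_k(R/(I,L))_{d+1}=h_{d+1}-h_d>0$ already secured above. Hence $h_{d-1}<h_d<h_{d+1}$ and (b) holds.

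Now assume $h_{d+1}=h_d$. Then $((h_{d+1})_{(d+1)})^{-1}_{\pp 0}=0$, and a brief inspection of the Macaulay expansion at index $d+1$ forces $h_{d+1}\le d+1$. On the other hand, the hypothesis $\beta_{1,d+2}(I^{\rm lex})>0$ together with Lemma~\ref{L:20101104-208}(b) reduces (after substituting $h_{d+1}=h_d$ and $((h_{d+1})_{(d+1)})^{-1}_{\pp 0}=0$) to $((h_d)_{(d)})^1_1>h_d$, which by the identity $((h)_{(d)})^1_1-h=\sum_i\binom{n_i}{i+1}$ is equivalent to $h_d\ge d+1$. Combining these inequalities pins $h_d=h_{d+1}=d+1$. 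Substituting $h_d=h_{d+1}=d+1$ back into the equality $\beta_{1,d+2}(I^{\rm lex})=\beta_{2,d+2}(I^{\rm lex})$ via both parts of Lemma~\ref{L:20101104-208} then forces $h_{d-1}=d+1$, giving (a).

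I expect the main obstacle to be the case $h_{d+1}>h_d$: the delicate point is converting the enforced equality in Green's Theorem into the \emph{geometric} vanishing of the module $((I:L)/I)_d$, and then bootstrapping this vanishing down to degree $d-1$ via the level hypothesis and Lemma~\ref{L:006}. Once this chain of vanishings is in hand, the strict inequality $h_{d-1}<h_d$ is extracted from the tension between ``$R/(I,L)$ vanishes in all degrees $\ge d$'' and ``$\dim_k(R/(I,L))_{d+1}>0$''. The case $h_{d+1}=h_d$, by contrast, is essentially arithmetic once the Macaulay bounds are unpacked.
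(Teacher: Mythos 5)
Your proof is correct, and it reaches the conclusion by a genuinely different route than the paper. The paper argues by contrapositive, running through five separate orderings of $(h_{d-1},h_d,h_{d+1})$: it cites Theorem~4.5 of \cite{AS} for $h_{d-1}>h_d=h_{d+1}$, and for $h_{d-1}\ge h_d<h_{d+1}$ it uses the reduction number $r_1(A)$ together with $\Gin(I)$ and Lemma~\ref{L:013} to manufacture a positive $\beta_{2,d+3}(I^{\rm lex})$ and contradict Lemma~\ref{L:20101104-209}. You instead extract the single consequence $\beta_{2,d+3}(I^{\rm lex})=0$ from Lemma~\ref{L:20101104-209} at the outset (the paper does this only later, in Corollary~\ref{T:20101110-404}), which via Lemma~\ref{L:20101104-208}(a) in degree $d+1$ gives $h_{d+1}-h_d=((h_{d+1})_{(d+1)})^{-1}_{\pp 0}\ge 0$ and kills all the decreasing configurations in one stroke. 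Your handling of $h_d<h_{d+1}$ is the most substantive departure: combining Green's Hyperplane Restriction Theorem with that identity to force $\dim_k((I:L)/I)_d=0$, then Lemma~\ref{L:006} to get $\dim_k((I:L)/I)_{d-1}=0$, and finally the fact that the Hilbert function of the standard graded algebra $R/(I,L)$ cannot return to a positive value after vanishing. This replaces the reduction-number argument entirely and, unlike the paper's exclusion argument, produces the strict inequality $h_{d-1}<h_d$ directly. Your arithmetic treatment of $h_d=h_{d+1}$ likewise subsumes the paper's Cases 1 and 5 and its part (a) simultaneously, without the external citation. What the paper's longer route buys is an explicit identification of which numerical obstruction (a nonzero $\beta_{2,d+3}$, a socle element in degree $d$, etc.) defeats each configuration; what yours buys is brevity and self-containedness. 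One point you should make explicit: applying Lemma~\ref{L:20101104-208} in degrees $d$ and $d+1$ requires $h_d<\binom{2+d}{2}$ and $h_{d+1}<\binom{3+d}{2}$. Both hold here, since $\beta_{2,d+2}(I^{\rm lex})>0$ forces a minimal generator of $I^{\rm lex}$ in degree $d$ by Theorem~\ref{T:012}, whence $h_d<\binom{2+d}{2}$, and Macaulay's bound then gives the second inequality; the paper records exactly this observation in the proof of Corollary~\ref{T:20101110-404}.
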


\begin{proof} We shall prove this theorem using the contrapositive.  

\medskip

(a) Assume $h_{d-1}= h_d = h_{d+1}$.  First if $h_d\leq d$, then $((h_{d})_{(d)})^{-1}_{\pp0}=0$ and thus, by Lemma~\ref{L:20101104-208}, we have that
$$
\begin{array}{lllllllllllllll}
0<\beta_{2, d+2}(I^{\rm lex})
= h_{d-1}-h_d + ((h_{d})_{(d)})^{-1}_{\pp0}=0,
\end{array}
$$
which is impossible.

Second, if $h_d\geq d+2$, then $((h_{d+1})_{(d+1)})^{-1}_{\pp0}\geq 1$ and thus, by Lemma~\ref{L:20101104-208} again,
$$
\begin{array}{lllllllllllllll}
\beta_{2, d+3}(I^{\rm lex})
& = & h_{d}-h_{d+1} + ((h_{d+1})_{(d+1)})^{-1}_{\pp0}\\
& = & ((h_{d+1})_{(d+1)})^{-1}_{\pp0}>0.
\end{array}
$$
Hence, by Lemma~\ref{L:20101104-209}, $A$ is not level.

\medskip

(b) Now suppose $h_{d-1},h_d$ and $h_{d+1}$ are not the same, and (b) does not hold. There are five cases to be considered. 

\medskip
\ni
{\bfseries\em Case 1.} If $h_{d-1}>h_d=h_{d+1}$, then by Theorem~4.5 in \cite{AS}, $A$ is not level.

\medskip
\ni
{\bfseries\em Case 2.} If $h_{d-1}\ge h_d > h_{d+1}$, then $h_{d+1}<\binom{2+(d+1)}{2}$ and thus, by Lemma~\ref{L:20101104-208},
$$
\begin{array}{llllllllllllllll}
\beta_{2,d+3}(I^{\rm lex})
& = & h_d-h_{d+1}+((h_{d+1})^{-1}_{\pp0}\\
& \ge & h_d-h_{d+1} \\
& > & 0.
\end{array}
$$
Hence, by Lemma~\ref{L:20101104-209}, $A$ is not level.



\medskip
\ni
{\be Case 3.} Suppose that $h_{d-1}\geq h_{d}< h_{d+1}$. For this case, we shall use the reduction number $r_1(A)$. 

Assume $r_1(A)<d$. Note that, for a general linear form $L$ in $R$, it follows from Lemma~\ref{Reduction Number_2} that
$$
\H(R/(I,L),t)=0 \quad \text{for} \quad t\ge d.
$$
For such $t$ with the following exact sequence
$$
\begin{array}{lllllllllllllll}
0 & \ra & ((I:L)/I)_{t-1}(-1) & \ra & (R/I)_{t-1}(-1) & \overset{\times L}{\ra} & (R/I)_t & \ra & 0, 
\end{array}
$$
we have
$$
h_{t-1}=h_{t}+\dim_k ((I:L)/I)_{t-1}\ge h_{t}.
$$
So $h_{d-1}\ge h_d \ge h_{d+1}$, which is not the case.
Thus, we now assume that $r_1(A) \geq d$.

Suppose that $A$ is level and let $L$ be a general linear form in $R=k[x_1,x_2,x_3]$. Now consider the exact sequence
\begin{equation} \label{EQ:311-1}
\begin{array}{llllllllllllll}
0 & \ra & ((I: L)/I)_{d-1}(-1) & \ra & (R/
I)_{d-1}(-1) & \overset{\times L}{\ra} & (R/I)_{d} & \ra &
(R/(I, L))_{d} & \ra & 0.
\end{array}
\end{equation}
Since $d\leq r_1(A)$, we see that $\dim (R/(I, L))_{d}>0$, and so
$$
\begin{array}{llllllllllllllllll}
\dim ((I: L)/I)_{d-1}
& = & h_{d-1}-h_d +\dim (R/(I, L))_{d} & (\text{by equation~\eqref{EQ:311-1}})\\
&\geq & \dim (R/(I, L))_{d} >0 & (\text{since } h_{d-1}\geq h_d).
\end{array}
$$
Moreover, since $A$ is level, we have
$$
\begin{array}{llllllllllllll}
0
& < & \dim ((I: L )/I)_{d-1} \\
& \le & 2\dim ((I: L)/I)_{d} & (\text{by Lemma~\ref{L:006} (a)})\\
& = & 2\dim ((\Gin(I): x_3)/\Gin(I))_{d} & (\text{by Remark~\ref{R:20101107-212}})\\
& = & 2\beta_{2,d+3}({\rm Gin}(I)) & (\text{by Lemma~\ref{L:013}})\\
& \le & 2\beta_{2,d+3}(I^{\rm lex}) & (\text{by the theorem of BHP in \cite{Bi, Hul, Pa}}).
\end{array}
$$
Thus, by Lemma~\ref{L:20101104-209}, $A$ has a socle element in degree $d$, which is a contradiction.

\medskip
\ni
{\be Case 4.} If $h_{d-1}< h_{d}>h_{d+1}$, then 
$$
\beta_{2,d+3}(I^{\rm lex}) =h_{d}-h_{d+1}+((h_{d+1})_{(d+1)})^{-1}_{\pp0}>0.
$$
Hence, by Lemma~\ref{L:20101104-209}, $A$ is not level.

\medskip

\ni
{\be Case 5.} Suppose $h_{d-1}< h_{d}=h_{d+1}$. If $h_d\leq d$ then $((h_{d})_{(d)})^{-1}_{\pp0}=0$, and thus
$$
0 < \beta_{2,d+2}(I^{\rm lex})=h_{d-1}-h_{d}+((h_{d})_{(d)})^{-1}_{\pp0}<0,
$$
which is impossible. Hence $h_d\ge d+1$. 

If $h_d=d+1$, then $((h_{d})_{(d)})^{-1}_{\pp0}=1$, and so
$$
\begin{array}{llllllllllllllllll}
h_d
& > & h_{d-1} \\
& = & h_d-((h_{d})_{(d)})^{-1}_{\pp0}+\beta_{2,d+2}(I^{\rm lex}) & (\text{by Lemma~\ref{L:20101104-208} (a)})\\
& = & (d+1)-1+\beta_{2,d+2}(I^{\rm lex})\\
& \ge & d+1 & (\text{since } \beta_{2,d+2}(I^{\rm lex})>0)\\
& = & h_d,
\end{array}
$$
which is impossible. Thus we have $h_d\geq d+2$, that is, $((h_{d+1})_{(d+1)})^{-1}_{\pp0}\geq 1$. Then, by Lemma~\ref{L:20101104-208} (a), we obtain
$$
\beta_{2,d+3}(I^{\rm lex}) =h_{d}-h_{d+1}+((h_{d+1})_{(d+1)})^{-1}_{\pp0}>0.
$$
Therefore, by Lemma~\ref{L:20101104-209}, $A$ is not level, which completes the proof.
\end{proof}

The following example shows that there exists an Artinian level $O$-sequence which satisfies the condition $h_{d-1}= h_d = h_{d+1}=d+1$.

\begin{exmp} \label{EX:20101107-302} Let $I=(x_1^2, x_2^{3})+(x_1,x_2,x_3)^7$. Then the Hilbert function of $R/I$ is
$$
(1,3,5,6,6,6,6) 
$$
and the reduction number $r_1(R/I)$ is
$$
\min\{ \ell \mid x_2^{\ell+1}\in I\}=2.
$$

Moreover, it is immediate that
$$
{\soc}(R/I)=(R/I)_6,
$$
and so the minimal free resolution of $R/I$ is
$$
\begin{array}{lllllllllllllllll}
0 & \ra &  R(-9)^6 \ \  \ra \ \ R(-5) \oplus  R(-8)^{12} 
  \ \ \ra \ \  R(-2) \oplus  R(-3) \oplus  R(-7)^6 \\
  & \ra &  R \ \ \ra \ \ R/I \ \ra \ \ 0.  
\end{array}
$$

Note that the minimal free resolution of $R/I^{\rm lex}$ is 
$$
\begin{array}{lllllllllllllllllllllll}
0 & \ra &  R(-6)\oplus R(-{\mathbf 7})\oplus R^6(-9) \\
& \ra &  R(-4)\oplus R^2(-5)\oplus R^2(-6)\oplus R(-{\mathbf 7})\oplus R^{12}(-8) \\
& \ra &  R(-2)\oplus R(-3)\oplus R(-4)\oplus R(-5)\oplus R(-6)\oplus R^6(-7) & \ra &  R & \ra & R/I^{\rm lex} & \ra & 0 . 
\end{array} 
$$
This means that $R/I$ is an Artinian level algebra with the condition $h_4=h_5=h_6=5+1$, and 
$$
\beta_{1,5+2}(I^{\rm lex})=\beta_{2,5+2}(I^{\rm lex})=1.
$$
\end{exmp}

\begin{rem} In Example~\ref{EX:20101107-302}, we constructed an Artinian level algebra $R/I$ which satisfied the condition $h_{d-1}=h_d=h_{d+1}=d+1$ and $\beta_{1,d+2}(I^{\rm lex})=\beta_{2,d+2}(I^{\rm lex})>0$. The following are other examples of Artinian level $O$-sequences which satisfy the condition $h_{d-1}<h_d<h_{d+1}$ and $\beta_{1,d+2}(I^{\rm lex})=\beta_{2,d+2}(I^{\rm lex})>0$. 
\end{rem} 

\begin{exmp}[CoCoA] \label{EX:20101110-401}
We provide two examples of our results via calculations done by CoCoA.
\begin{itemize}
\item[(a)] 
Consider a differentiable $O$-sequence
 $\H=(1, 3, 6, 10, {\mathbf {13}}, {\mathbf {15}}, {\mathbf {17}}, 19, 20)$ and  an Artinian algebra $R/I$ with Hilbert function $\H$. Then the minimal free resolution of $R/I^{\rm lex}$ is
$$
\begin{array}{lllllllllllllllllll} 
0 & \ra &  R(-{\mathbf 7}) \oplus R(-10) \oplus R^{20}(-11) \\
  & \ra &  R(-5) \oplus R^2(-6) \oplus R(-{\mathbf 7}) \oplus R^2(-9) \oplus R^{42}(-10) \\
  & \ra &  R^2(-4) \oplus R(-5) \oplus R(-6) \oplus R(-8) \oplus R^{22}(-9) 
  \ \ \ra \  \ R/I^{\rm lex} \ \ \ra \ \ 0 ,
\end{array}
$$   
and hence
$$
\beta_{2,7}(I^{\rm lex})=\beta_{1,7}(I^{\rm lex})=1.
$$
Moreover, the sequence $\H$ is a level $O$-sequence since any differentiable $O$-sequence can be a truncation of an Artinian Gorenstein $O$-sequence. 

\item[(b)] Here is another differentiable $O$-sequence $\H=(1, 3, 6, 10, {\mathbf{12}}, {\mathbf {14}}, {\mathbf {16}}, {18}, 19, 20)$, which is also a level $O$-sequence by the same argument as in (a). Furthermore, the minimal free resolution of $R/I^{\rm lex}$ is
$$
\begin{array}{lllllllllllllllllll} 
0  & \ra &  R(-{\mathbf 6})\oplus R(-10)\oplus R(-11)\oplus R^{20}(-12) \\
   & \ra &  R^3(-5)\oplus R(-{\mathbf 6})\oplus R^2(-9)\oplus R^2(-10)\oplus R^{42}(-11) \\
   & \ra &  R^3(-4)\oplus R(-5)\oplus R(-8)\oplus R(-9)\oplus R^{22}(-10) \ \ \ra \ \  R \ \ \ra \ \ R/I^{\rm lex} \ \ \ra \ \ 0,
\end{array}
$$
and thus
$$
\beta_{2,6}(I^{\rm lex})=\beta_{1,6}(I^{\rm lex})=1.
$$
\end{itemize}
\end{exmp}

 
\begin{rem} In Example~\ref{EX:20101110-401}, both examples show that $\Delta h_{d}=\Delta h_{d+1}$. From this observation, we obtain the following result. 
\end{rem}

\begin{cor} \label{T:20101110-404}
Let $A=R/I$ be an Artinian ring of codimension 3. Suppose that
$$\beta_{1,d+2}(I^{\rm lex})=\beta_{2,d+2}(I^{\rm lex})>0 \text{ for some } d<s.$$
If $A$ is level and $h_{d-1}<h_d<h_{d+1}$, then $\Delta h_{d}= \Delta h_{d+1}$.
\end{cor}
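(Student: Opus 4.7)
The plan is to combine the Betti-number hypothesis with the consequences of $A$ being level to derive a single numerical equation, and then to close by a combinatorial identity on the Macaulay expansion of $h_d$.

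First, I would verify that Lemma~\ref{L:20101104-208} applies at both $d$ and $d+1$. By the Eliahou--Kervaire formula (Theorem~\ref{T:012}), $\beta_{2,d+2}(I^{\rm lex})>0$ forces a minimal generator of $I^{\rm lex}$ of degree $\le d$, whence $I^{\rm lex}_d\neq 0$ and $h_d<\binom{d+2}{2}$; similarly $\beta_{1,d+2}(I^{\rm lex})>0$ forces a minimal generator in degree $d+1$, hence $h_{d+1}<\binom{d+3}{2}$. Since $A$ is level, Lemma~\ref{L:20101104-209} applied to the Betti equality yields $\beta_{2,d+3}(I^{\rm lex})=0$, and plugging this into Lemma~\ref{L:20101104-208}(a) at $d+1$ gives $((h_{d+1})_{(d+1)})^{-1}_{\pp 0}=h_{d+1}-h_d$. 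Substituting this into Lemma~\ref{L:20101104-208}(b) at $d$ simplifies $\beta_{1,d+2}(I^{\rm lex})$ to $((h_d)_{(d)})^{1}_{1}-h_{d+1}$, and equating with the formula for $\beta_{2,d+2}(I^{\rm lex})$ from Lemma~\ref{L:20101104-208}(a) gives, after rearrangement,
\[
((h_d)_{(d)})^{1}_{1}-((h_d)_{(d)})^{-1}_{\pp 0}=h_{d-1}-h_d+h_{d+1}.
\]

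To finish, I would prove the binomial identity
\[
((h_d)_{(d)})^{1}_{1}-((h_d)_{(d)})^{-1}_{\pp 0}=h_d
\]
under the already established bound $h_d<\binom{d+2}{2}$. This bound forces the leading index of the Macaulay expansion $h_d=\sum_{i=j}^{d}\binom{n_i}{i}$ to satisfy $n_d\le d+1$, and then the strict descent $n_d>n_{d-1}>\cdots>n_j$ yields $n_i\le i+1$ for every $i\in\{j,\dots,d\}$. A termwise Pascal's-rule computation shows that for each $n_i\in\{i,i+1\}$ the summand $\binom{n_i+1}{i+1}-\binom{n_i-1}{i}-\binom{n_i}{i}$ vanishes, so summing over $i$ gives the identity. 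Comparing with the displayed equation above yields $h_{d-1}+h_{d+1}=2h_d$, i.e., $\Delta h_d=\Delta h_{d+1}$, as desired.

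The main obstacle is the final binomial identity: one must verify by a case-by-case Pascal's-rule calculation that the contribution of each summand vanishes in the regime $n_i\le i+1$, and one must also be careful to confirm that the structural bound $n_d\le d+1$ on the Macaulay expansion really is forced by $h_d<\binom{d+2}{2}$. Everything else is a direct chaining of Lemmas~\ref{L:20101104-208} and~\ref{L:20101104-209}.
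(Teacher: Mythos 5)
Your proposal is correct, and its first half coincides exactly with the paper's: both use the contrapositive of Lemma~\ref{L:20101104-209} to get $\beta_{2,d+3}(I^{\rm lex})=0$, then Lemma~\ref{L:20101104-208}(a) at degree $d+1$ to obtain $((h_{d+1})_{(d+1)})^{-1}_{\pp 0}=\Delta h_{d+1}$, and Lemma~\ref{L:20101104-208}(b) to reduce $\beta_{1,d+2}(I^{\rm lex})$ to $((h_d)_{(d)})^{1}_{1}-h_{d+1}$ (your preliminary checks that $h_d<\binom{d+2}{2}$ and $h_{d+1}<\binom{d+3}{2}$ via Theorem~\ref{T:012} are the same as the paper's observation that $\beta_{2,d+2}(I^{\rm lex})>0$ forces $\beta_{0,d}(I^{\rm lex})>0$). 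Where you genuinely diverge is the endgame. The paper writes out the $(d+1)$-binomial expansion of $h_{d+1}$ explicitly, rules out the boundary cases $c-1\le d\le c$, deduces from $\beta_{1,d+2}(I^{\rm lex})>0$ that $\delta=1$ and $\beta_{1,d+2}(I^{\rm lex})=1$, and then reads off $\Delta h_d=c=\Delta h_{d+1}$ from the two expansions. You instead equate the two Betti formulas into the single relation $((h_d)_{(d)})^{1}_{1}-((h_d)_{(d)})^{-1}_{\pp 0}=h_{d-1}-h_d+h_{d+1}$ and close with the identity $((h_d)_{(d)})^{1}_{1}-((h_d)_{(d)})^{-1}_{\pp 0}=h_d$, valid whenever every index in the Macaulay expansion satisfies $n_i\in\{i,i+1\}$ (which $h_d<\binom{d+2}{2}$ forces); this identity is precisely the paper's Lemma~\ref{L:20101110-406}, which the authors only prove and use in Section~4. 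Your route is shorter and avoids the case analysis on $c$ and $\delta$ (and in fact never uses the hypothesis $h_{d-1}<h_d<h_{d+1}$), at the cost of not extracting the extra information the paper's argument yields along the way, namely $\beta_{1,d+2}(I^{\rm lex})=1$ and the exact shape of $(h_d)_{(d)}$. Your termwise Pascal computation for the identity is correct, and the bound $n_d\le d+1$ does follow from $h_d<\binom{d+2}{2}$ together with the strict descent of the $n_i$, so there is no gap.
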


\begin{proof}
Note that it suffices to prove that $\Delta h_{d}= \Delta h_{d+1}$ for $h_{d-1}<h_d<h_{d+1}$. 

Suppose that $A$ is level. Using Lemma~\ref{L:20101104-209}, we see that 
\begin{equation}\label{eq:308}
\beta_{2,d+3}(I^{\rm lex})=0.
\end{equation}
Furthermore, it is a simple consequence of Eliahou-Kervaire (Theorem~\ref{T:012}) that
$\beta_{2,d+2}(I^{\rm lex})>0$ implies $\beta_{0,d}(I^{\rm lex})>0$. Hence we have
\begin{equation}\label{eq:308-1}
h_d<\binom{2+d}{2}.
\end{equation} 
Since $h_{d}<h_{d+1}$, one can easily check that $d+1< h_{d+1}$ and thus
$
\displaystyle d+1<h_{d+1}<\binom{2+(d+1)}{2}.
$
Then the $(d+1)$-{\em binomial expansion} of $h_{d+1}$ is of the form 
\begin{equation}\label{EQ:201}
(h_{d+1})_{(d+1)}:=\binom {1+(d+1)}{d+1}+\cdots+\binom {1+(d-(c-2))}{d-(c-2)}+\binom{d-(c-1)}{d-(c-1)}+\cdots+\binom{\delta}{\delta}  
\end{equation}
where $\delta\ge 1$. It follows from Lemma~\ref{L:20101104-208} (a) and (\ref{eq:308}) that
\begin{equation}\label{EQ:20101110-402}
\Delta h_{d+1}
  =  ((h_{d+1})_{(d+1)})^{-1}_{\pp0}-\beta_{2,d+3}(I^{\rm lex})=((h_{d+1})_{(d+1)})^{-1}_{\pp0}.
\end{equation}
Now we consider the case $c<d$ only in equation (\ref{EQ:201}). Indeed, if $c-1\leq d\leq c,$ then 
we have 
\[
(h_{d+1})_{(d+1)} =  \begin{cases}
\ds \binom {2+d}{d+1}+\cdots+\binom {3}{2}+\binom{2}{1}, & 
\text{if } d=c-1, 
\\[2ex]
\ds \binom {2+d}{d+1}+\cdots+\binom {4}{3}+\binom{3}{2}+\binom{1}{1}, & 
\text{if } d=c.
\end{cases}
\]
Using Pascal's identity and equation (\ref{EQ:20101110-402}) for both cases, we have 
\[h_d=\binom{2+d}{d},\]
which contradicts equation (\ref{eq:308-1}). Hence, by equation (\ref{EQ:20101110-402}), 
$$
\begin{array}{llllllllllllllllllllllllll}
h_d
& = & h_{d+1}-((h_{d+1})_{(d+1)})^{-1}_{\pp0}\\[.5ex] 
& = & \ds \phantom{\bigg[}\binom {1+d}{d}+\cdots+\binom {1+(d-(c-1))}{d-(c-1)}+\binom{d-(c-1)}{d-(c-1)}+\cdots+\binom{\delta}{\delta}\phantom{\bigg]},\\[2ex] 
& = & \begin{cases}
\ds \binom {1+d}{d}+\cdots+\binom {1+(d-(c-1))}{d-(c-1)}+\binom{1+(d-c)}{d-c}, & 
\text{if } \delta=1, 
\\[1.5ex]
\ds \binom {1+d}{d}+\cdots+\binom {1+(d-(c-1))}{d-(c-1)}+\binom{d-c}{d-c}+\cdots+\binom{\delta-1}{\delta-1}, & 
\text{if } \delta>1, 
\end{cases}
\end{array} 
$$
i.e.,
$$
\begin{array}{lllllllllllllllllllll}
{((h_d)_{(d)})}^1_1
& = & 
\begin{cases} \ds \binom {1+(d+1)}{(d+1)}+\cdots+\binom {1+(d-(c-2))}{(d-(c-2))}+\binom{1+(d-(c-1))}{(d-(c-1))}, & \text{if } \delta=1,\\[1.5ex]
\ds \binom {1+(d+1)}{(d+1)}+\cdots+\binom {1+(d-(c-2))}{(d-(c-2))}+\binom{d-(c-1)}{d-(c-1)}+\cdots+\binom{\delta}{\delta}, & \text{if } \delta>1. 
\end{cases}
\end{array} 
$$
Thus
$$ 
\begin{array}{llllllllllllll}
{((h_d)}_{(d)})^{1}_{1}-h_{d+1}
& = & \begin{cases}
1, & \text{if } \delta=1,\\
0, & \text{if } \delta>1.
\end{cases}	
\end{array}
$$
Moreover, by Lemma~\ref{L:20101104-208} (b),
$$
\begin{array}{llllllllllllllllllll}
0 & < & \beta_{1,d+2}(I^{\rm lex}) \\
& = & {((h_d)}_{(d)})^{1}_{1}+h_d-2h_{d+1}+((h_{d+1})_{(d+1)})^{-1}_{\pp0}\\
& = & {((h_d)}_{(d)})^{1}_{1} -h_{d+1}-\Delta h_{d+1} +((h_{d+1})_{(d+1)})^{-1}_{\pp0}\\
& = & {((h_d)}_{(d)})^{1}_{1} -h_{d+1} \quad (\text{by equation  \eqref{EQ:20101110-402}}).
\end{array}
$$ 
This means that
\begin{equation}\label{eq:308-2}
\beta_{1,d+2}(I^{\rm lex})={((h_d)}_{(d)})^{1}_{1}-h_{d+1}=1 \quad \text{ and } \quad \delta=1.
\end{equation}
In other words,  
$$
(h_d)_{(d)}=\binom {1+d}{d}+\cdots+\binom {1+(d-(c-1))}{d-(c-1)}+\binom{1+(d-c)}{(d-c)}.
$$
Hence,  $((h_{d+1})_{(d+1)})^{-1}_{\pp0}=c$ and $((h_{d})_{(d)})^{-1}_{\pp0}=c+1$, and so we obtain 
$$
\begin{array}{llllllllllllllllll}
\Delta h_d 
& = & ((h_{d})_{(d)})^{-1}_{\pp0}-\beta_{2,d+2}(I^{\rm lex})& (\text{by Lemma~\ref{L:20101104-208} (a)} )\\
& = & c+1-\beta_{1,d+2}(I^{\rm lex}) & (\text{since } \beta_{1,d+2}(I^{\rm lex})=\beta_{2,d+2}(I^{\rm lex})>0 )\\
& = & c & (\text{by equation (\ref{eq:308-2})} )\\
& = & ((h_{d+1})_{(d+1)})^{-1}_{\pp0}\\
& = & \Delta h_{d+1}, & (\text{by equation \eqref{EQ:20101110-402}} )
\end{array}
$$
as we wished.
\end{proof}

\begin{exmp} Let
$R/I$ be an Artinian ring with Hilbert function $\H=(1, 3, 6, 10, {\mathbf{15}}, {\mathbf{16}}, {\mathbf{18}}, 20)$. Then the minimal free resolution of $R/I^{\rm lex}$ is 
$$
\begin{array}{lllllllllllllllllll}
0 & \ra &  {R}^{\mathbf 2}({\bf -7}) \oplus R(-8) \oplus R^{20}(-10) \ \ \ra \ \  R^6(-6) \oplus R^2({\bf -7}) \oplus R(-8) \oplus R^{42}(-9) \\
  & \ra &  R^5(-5) \oplus R(-6) \oplus R(-7) \oplus R^{22}(-8) \ \ \ra \ \ R
  \ \ \ra \ \ R/I^{\rm lex} \ \ \ra \ \ 0.
\end{array}
$$ 
Thus 
$$
\beta_{2,7}(I^{\rm lex})=\beta_{1,7}(I^{\rm lex})=2
\quad \text{and} \quad \Delta h_5=1\ne 2=\Delta h_6.
$$
By Theorem~\ref{T:20101110-404}, any Artinian ring $R/I$ with Hilbert function $\H$ cannot be level. 
\end{exmp}

\section{$O$-sequences with The Condition $h_{d-1}=h_d<h_{d+1}$}

In this section, we consider Artinian $O$-sequences with the condition $h_{d-1}=h_d<h_{d+1}$. To describe an Artinian $O$-sequence with this condition, we begin  with the following lemma.

\begin{lem} \label{L:20101110-406}
Let $c$ and $d$ be positive integers satisfying $d<c<\binom{d+2}{2}$. Then
$$
(c_{(d)})^{-1}_{\pp0}-(c_{(d)})^{1}_{1}+c=0.
$$
\end{lem}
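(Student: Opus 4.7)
The plan is to expand $c$ in its $d$-binomial form $c=\binom{n_d}{d}+\binom{n_{d-1}}{d-1}+\cdots+\binom{n_j}{j}$ with $n_d>n_{d-1}>\cdots>n_j\ge j\ge 1$, and evaluate the target expression term by term. By definition,
$$
(c_{(d)})^{-1}_{\pp 0}-(c_{(d)})^{1}_{1}+c=\sum_{i=j}^{d}\Bigl[\binom{n_i-1}{i}+\binom{n_i}{i}-\binom{n_i+1}{i+1}\Bigr].
$$
Applying Pascal's identity twice, first $\binom{n_i+1}{i+1}=\binom{n_i}{i+1}+\binom{n_i}{i}$ and then $\binom{n_i}{i+1}=\binom{n_i-1}{i+1}+\binom{n_i-1}{i}$, each bracket collapses to $-\binom{n_i-1}{i+1}$. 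Thus it suffices to show that $\binom{n_i-1}{i+1}=0$ for every $i$ in the expansion.

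For this, I would exploit the hypothesis $d<c<\binom{d+2}{2}$. The strict upper bound forces $n_d\le d+1$: if $n_d\ge d+2$ then already $c\ge\binom{n_d}{d}\ge\binom{d+2}{d}=\binom{d+2}{2}$, a contradiction. On the other hand, if $n_d=d$ then the chain $n_d>n_{d-1}>\cdots>n_j\ge j$ forces $n_i=i$ for all $i\le d$, giving $c\le d$, which contradicts $c>d$. Hence $n_d=d+1$.

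Next I would propagate this down by induction: assuming $n_{i+1}\le i+2$, the strict inequality $n_i<n_{i+1}$ yields $n_i\le i+1$. So $n_i\le i+1$ for every $i$ appearing in the expansion, and therefore $\binom{n_i-1}{i+1}\le\binom{i}{i+1}=0$. Summing gives the claimed identity.

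The only potentially delicate point is the propagation step; it is short, but it is exactly where the assumption $c<\binom{d+2}{2}$ (which controls $n_d$) is essential, since without it some $n_i$ could exceed $i+1$ and leave nonzero residues.
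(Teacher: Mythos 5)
Your proof is correct and takes essentially the same route as the paper: both expand $c$ in its $d$-binomial form and use the hypotheses $d<c<\binom{d+2}{2}$ to force $n_i\le i+1$ for every index in the expansion, which is exactly the shape the paper assumes ``without loss of generality.'' Your termwise Pascal collapse to $-\sum_i\binom{n_i-1}{i+1}$ is just a tidier bookkeeping of the paper's computation that $(c_{(d)})^{-1}_{\phantom{-}0}$ and $(c_{(d)})^{1}_{1}-c$ both equal $\alpha+1$, and it has the merit of actually justifying the structural claim the paper leaves implicit.
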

\begin{proof} Without loss of generality, we assume that
\[c_{(d)}=\binom {1+d}{d}+\cdots+\binom {1+d-\alpha}{d-\alpha}+\binom{d-(\alpha+1)}{d-(\alpha+1)}+\cdots+\binom{\delta}{\delta}.\]
Then we have
$$
\begin{array}{lllllllllll}
((c)_{(d)})^{-1}_{\pp0}
& = & \alpha+1, \quad \text{and}\\
((c)_{(d)})^{1}_{1} -c
& = & \alpha+1,
\end{array}
$$
and thus 
$$
((c)_{(d)})^{-1}_{\pp0}-{((c)}_{(d)})^{1}_{1}+c=0,
$$
as we wished.
\end{proof}

The following result is an useful criterion to determine if $A$ is level.

\begin{pro} \label{P:20101110-406}
Let $A=R/I$ be an Artinian  ring of codimension $3$ with Hilbert function $\H=(h_0,h_1,\dots,h_s)$. Suppose that $h_{d-1}= h_d < h_{d+1}$ for some $d<s$. Then $A$ is not level if $$((h_{d+1})_{(d+1)})^{-1}_{\pp0} \leq 2(\Delta h_{d+1}).$$ 
\end{pro}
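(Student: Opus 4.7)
The plan is to show that the numerical hypothesis is exactly equivalent to the Betti-number inequality $\beta_{1,d+2}(I^{\rm lex})\le \beta_{2,d+2}(I^{\rm lex})$, so that the cancellation principle (strict case) together with Theorem~\ref{thm:4} (equality case) finishes the job. Before invoking Lemmas~\ref{L:20101104-208} and~\ref{L:20101110-406}, I would first verify their numerical hypotheses hold. Since $h_{d-1}=h_d\le \binom{d+1}{2}<\binom{d+2}{2}$, Lemma~\ref{L:20101104-208} applies in degree $d$. Next, $h_d\ge d+1$: otherwise the $d$-binomial expansion of $h_d$ would consist only of terms $\binom{j}{j}$, forcing $((h_d)_{(d)})^{1}_{1}=h_d$ and hence $h_{d+1}\le h_d$ by Macaulay (Theorem~\ref{T:201}(a)), contradicting $h_d<h_{d+1}$. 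In particular $d<h_d<\binom{d+2}{2}$, so Lemma~\ref{L:20101110-406} with $c=h_d$ gives
\[
((h_d)_{(d)})^{1}_{1} \;=\; h_d + ((h_d)_{(d)})^{-1}_{\pp 0}.
\]

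The computational heart of the argument is a one-line identity I would obtain by direct substitution. Lemma~\ref{L:20101104-208}(a) together with $h_{d-1}=h_d$ yields
\[
\beta_{2,d+2}(I^{\rm lex})=((h_d)_{(d)})^{-1}_{\pp 0},
\]
which is strictly positive because $h_d\ge d+1$. Substituting this and the identity above (together with $h_d=h_{d+1}-\Delta h_{d+1}$) into Lemma~\ref{L:20101104-208}(b) and simplifying then produces
\[
\beta_{1,d+2}(I^{\rm lex})=\beta_{2,d+2}(I^{\rm lex}) + ((h_{d+1})_{(d+1)})^{-1}_{\pp 0} - 2\Delta h_{d+1}.
\]
Hence the hypothesis $((h_{d+1})_{(d+1)})^{-1}_{\pp 0}\le 2\Delta h_{d+1}$ is precisely the inequality $\beta_{1,d+2}(I^{\rm lex})\le \beta_{2,d+2}(I^{\rm lex})$, with the positivity of $\beta_{2,d+2}(I^{\rm lex})$ for free.

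To finish, I would split into two cases. If the inequality is strict, the cancellation principle recalled in the introduction immediately prevents $A$ from being level. If equality holds, then $\beta_{1,d+2}(I^{\rm lex})=\beta_{2,d+2}(I^{\rm lex})>0$, so Theorem~\ref{thm:4} applies; since the configuration $h_{d-1}=h_d<h_{d+1}$ matches neither the strictly increasing case (b) nor the $h_{d-1}=h_d=h_{d+1}=d+1$ case (a) listed there, $A$ is again not level. The only real obstacle is the algebraic identity rewriting $\beta_{1,d+2}(I^{\rm lex})-\beta_{2,d+2}(I^{\rm lex})$ cleanly as $((h_{d+1})_{(d+1)})^{-1}_{\pp 0}-2\Delta h_{d+1}$; once Lemma~\ref{L:20101110-406} is in hand, the rest is a direct appeal to previously established results.
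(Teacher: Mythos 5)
Your proposal is correct and follows essentially the same route as the paper: both reduce the hypothesis to $\beta_{1,d+2}(I^{\rm lex})\le\beta_{2,d+2}(I^{\rm lex})$ via Lemma~\ref{L:20101104-208} and Lemma~\ref{L:20101110-406}, then dispose of the strict case by the cancellation principle and the equality case by Theorem~\ref{thm:4}. The only cosmetic difference is that you derive the exact identity $\beta_{1,d+2}(I^{\rm lex})-\beta_{2,d+2}(I^{\rm lex})=((h_{d+1})_{(d+1)})^{-1}_{\pp 0}-2\Delta h_{d+1}$ before applying the hypothesis, whereas the paper folds the hypothesis into an inequality chain.
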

\begin{proof} Since $h_{d-1}=h_d$, we get $h_d<\binom{2+d}{2}$. 
If $h_d\le d$, by Macaulay's Theorem we have $h_{d+1}\leq d=h_d$. So we may assume that $d< h_d< \binom{2+d}{2}$. Hence $((h_{d})_{(d)})^{-1}_{\pp0}>0$. 

Since $((h_{d+1})_{(d+1)})^{-1}_{\pp0} \leq 2(\Delta h_{d+1})$, we obtain that 
$$
\begin{array}{llllllllllllllllll}
\beta_{2,d+2}(I^{\rm lex}) 
& = & h_{d-1}-h_d +((h_{d})_{(d)})^{-1}_{\pp0} \quad (\text{by Lemma~\ref{L:20101104-208} (a)}) \\
& = & ((h_{d})_{(d)})^{-1}_{\pp0}\\
& = & ((h_{d})_{(d)})^{-1}_{\pp0}+\beta_{1,d+2}(I^{\rm lex})-{((h_d)}_{(d)})^{1}_{1}-h_d+2 h_{d+1}-((h_{d+1})_{(d+1)})^{-1}_{\pp0}\\
&   & (\text{by Lemma~\ref{L:20101104-208} (b)}) \\
& = & (((h_{d})_{(d)})^{-1}_{\pp0}-{((h_d)}_{(d)})^{1}_{1}+h_d)+(2\Delta h_{d+1}-((h_{d+1})_{(d+1)})^{-1}_{\pp0})+\beta_{1,d+2}(I^{\rm lex})\\
& \ge & (((h_{d})_{(d)})^{-1}_{\pp0}-{((h_d)}_{(d)})^{1}_{1}+h_d)+\beta_{1,d+2}(I^{\rm lex})\quad (\text{by Lemma~\ref{L:20101110-406}})\\
& = & \beta_{1,d+2}(I^{\rm lex}).
\end{array}
$$
If $\beta_{2,d+2}(I^{\rm lex})>\beta_{1,d+2}(I^{\rm lex})$, then $A$ has a socle element in degree $d-1$, which means $A$ is not level. If
$$
\beta_{2,d+2}(I^{\rm lex}) =\beta_{1,d+2}(I^{\rm lex})=((h_{d})_{(d)})^{-1}_{\pp0}>0,
$$
by Theorem~\ref{thm:4} $A$ is not level, which completes the proof. 
\end{proof}

\begin{exmp} Consider an $O$-sequence $\H=(1,3,6,10,{\mathbf{15}},{\mathbf{15}}, {\mathbf{16}})$. Then
$$
2=((16)_{(6)})^{-1}_{\pp0} \le 2 \Delta h_6=2.
$$
Therefore, by Proposition~\ref{P:20101110-406}, any Artinian algebra with Hilbert function $\H$ cannot be level. 
\end{exmp}

Before we construct an Artinian level $O$-sequence with the condition $((h_{d+1})_{(d+1)})^{-1}_{\pp0} > 2(\Delta h_{d+1})$, we introduce the theorem of Iarrobino to obtain a new level $O$-sequence from the given level-$O$-sequence. Moreover, let us recall the main facts of the theory of {\em inverse system}, or {\em Macaulay duality}, which will be a fundamental tool to build an example. For a complete description, we refer  to \cite{G} and \cite{I-K}. 

Let $S=k[y_1,\dots,y_n]$ and consider $S$ as a graded $R=k[x_1,\dots,x_n]$-module where the action of $x_i$ on $S$ is partial differentiation with respect to $y_i$. Then there is a one to one correspondence between graded Artinian algebras $R/I$ and finitely generated graded $R$-submodules $M$ in $S$, where $I={\rm Ann}(M)$ is the annihilator of $M$ in $R$, and conversely $M=I^{-1}$ is the $R$-submodules of $S$ which is annihilated by $I$.

\begin{thm}[Theorem 4.8A, \cite{Ia:1}] \label{T:20101110-407} Let $\H'=(h_0,h_1,\dots,h_s)$ be the $h$-vector of a level algebra $A=R/{\rm Ann}(M)$. Then, if $F$ is a general form of degree $s$, the level algebra $B=R/{\rm Ann}(\langle M,F\rangle)$ has the $h$-vector $\H=(H_0,H_1,\dots,H_s)$ where
$$
H_i=\min\bigg\{h_i+\binom{r-1+s-i}{s-i}, \binom{r-1+i}{i}\bigg\}
$$
for $i=1,\dots,s.$
\end{thm}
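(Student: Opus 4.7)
The plan is to work entirely inside the Macaulay inverse system that the authors have just recalled. Because $A=R/\mathrm{Ann}(M)$ is level of socle degree $s$, the $R$-submodule $M\subset S$ is generated by $M_s$; adding a general form $F\in S_s$ produces $\langle M,F\rangle$, still generated in the single degree $s$. Hence $B=R/\mathrm{Ann}(\langle M,F\rangle)$ is automatically level of socle degree $s$, and its Hilbert function is computed degree-by-degree as
\[
H_i\;=\;\dim_k\langle M,F\rangle_i\;=\;\dim_k\bigl(M_i+R_{s-i}\cdot F\bigr),
\]
viewed as a subspace of $S_i$, where $R_{s-i}\cdot F$ denotes the span of the $(s-i)$th partial derivatives of $F$.

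Next I would read off the two tautological upper bounds. Since $\langle M,F\rangle_i\subseteq S_i$ one has $H_i\le \binom{r-1+i}{i}=\dim_k S_i$, and since $\dim_k R_{s-i}\cdot F\le \dim_k R_{s-i}$ one has $H_i\le h_i+\binom{r-1+s-i}{s-i}$. Together these give
\[
H_i\;\le\;\min\Bigl\{h_i+\binom{r-1+s-i}{s-i},\;\binom{r-1+i}{i}\Bigr\}
\]
for every $F$, with no genericity hypothesis needed.

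The substantive step, which I expect to be the main obstacle, is showing that equality is forced for a general $F$. This splits into two independent genericity claims: (a) the differentiation map $R_{s-i}\to S_i$, $\phi\mapsto \phi\cdot F$, attains its generically maximal rank $\min\{\dim R_{s-i},\dim S_i\}$; and (b) given (a), the subspaces $M_i$ and $R_{s-i}\cdot F$ meet transversely inside $S_i$, so that $\dim(M_i+R_{s-i}\cdot F)$ equals the expected value $\min\{h_i+\dim R_{s-i}\cdot F,\;\dim_k S_i\}$.

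For (a) I would exhibit a single explicit $F_0$ with maximal-rank catalecticant, for instance a generic power sum such as $F_0=y_1^s+\cdots+y_r^s$, whose $(s-i)$th partials manifestly span either all of $S_i$ or a subspace of dimension $\binom{r-1+s-i}{s-i}$; semicontinuity of rank on the affine space $S_s$ then propagates the conclusion to a Zariski-open set of $F$. For (b) I would view $F\mapsto R_{s-i}\cdot F$ as a morphism to the Grassmannian of subspaces of $S_i$ of the appropriate dimension, observe that the locus of subspaces meeting $M_i$ in higher-than-expected dimension is a proper closed subvariety, and argue that the image of this morphism is not contained in it, again by producing one test point where the intersection is minimal. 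Intersecting the finitely many Zariski-open conditions for $i=1,\dots,s$ yields a single generic $F$ for which equality holds in every degree simultaneously, after which a short case analysis in the regimes $\dim R_{s-i}\le \dim S_i$ and $\dim R_{s-i}>\dim S_i$ confirms that the resulting dimensions match the stated minimum.
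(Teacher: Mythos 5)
The paper itself gives no proof of this statement: it is quoted directly from Iarrobino \cite{Ia:1} (Theorem 4.8A), so the only comparison available is with the standard argument. Your framework is the right one and is essentially how the result is proved: since $A$ is level of socle degree $s$, the module $M$ is generated in degree $s$, hence so is $\langle M,F\rangle$, which makes $B$ level of socle degree $s$; the identity $H_i=\dim_k\langle M,F\rangle_i=\dim_k\bigl(M_i+R_{s-i}\circ F\bigr)$ is correct by Macaulay duality; and the two upper bounds are indeed tautological.

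The gap lies in the two genericity claims, which you correctly identify as the substantive content but do not actually establish. First, your explicit witness for maximal catalecticant rank is wrong: for $F_0=y_1^s+\cdots+y_r^s$ and a monomial $\phi$ of degree $s-i$ one has $\phi\circ y_j^s=0$ unless $\phi=x_j^{s-i}$, so $R_{s-i}\circ F_0=\langle y_1^i,\dots,y_r^i\rangle$ has dimension only $r$; for $2\le i\le s-2$ this is strictly smaller than $\min\bigl\{\binom{r-1+s-i}{s-i},\binom{r-1+i}{i}\bigr\}$ whenever $r\ge 2$, so semicontinuity propagates nothing. The correct witness is $F_0=\ell_1^s+\cdots+\ell_m^s$ for sufficiently many \emph{general} linear forms $\ell_j$ (Iarrobino's compressed-algebra construction), using that $\phi\circ\ell^s$ is a scalar multiple of $\ell^i$ and that evaluation of $R_{s-i}$ at $m$ general points has maximal rank. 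Second, for the transversality of $M_i$ with $R_{s-i}\circ F$ you reduce to ``producing one test point where the intersection is minimal,'' but that is precisely the nontrivial step: the subspaces $R_{s-i}\circ F$ sweep out only a small special subvariety of the Grassmannian, so the fact that the bad locus is a proper closed subset of the Grassmannian proves nothing, and no test point is exhibited. The same sum-of-powers witness closes this gap as well --- when $m=\dim R_{s-i}\le\dim S_i$ the space $R_{s-i}\circ F_0$ is the span of $m$ general points $\ell_j^i$ of the non-degenerate Veronese variety, and adjoining them to $M_i$ one at a time increases the dimension until $S_i$ is exhausted, giving the expected intersection --- but one must still arrange a single $F$ valid for all $i$ simultaneously, since the required number of summands varies with $i$. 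As written, the two steps on which the theorem rests are, respectively, incorrect and missing.
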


The following theorem shows that there is an Artinian level algebra whose Hilbert function satisfies the condition
$$
h_{d-1}=h_d <h_{d+1} \quad \text{and} \quad ((h_{d+1})_{(d+1)})^{-1}_{\pp0} > 2(\Delta h_{d+1}).
$$

\begin{thm}\label{T:20101110-409}
 Let ${\bf H}=(1, 3, h_2, \ldots, h_{d-1}, h_d, h_{d+1})$ be an $O$-sequence satisfying 
 $$h_{d-1}= h_d < h_{d+1}.$$ 
  \begin{itemize}
   \item[(a)] If $h_d\le 3d+2$, then $\H$ is not level. 
   \item[(b)] If $h_d\geq 3d+3$, then there exists an Artinian level algebra with the Hilbert function $\H$ for some value of $h_{d+1}$. 
  \end{itemize}
\end{thm}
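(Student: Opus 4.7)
The plan is to prove (a) and (b) separately, using Proposition~\ref{P:20101110-406} as the main tool for (a) and Iarrobino's Theorem~\ref{T:20101110-407} for (b).

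For part (a), the first step is to eliminate $h_d\le d$ (by Macaulay, this would force $h_{d+1}\le h_d$, contradicting $h_d<h_{d+1}$). Assuming $h_d\ge d+1$, the constraint $h_d\le 3d+2$ restricts the $d$-binomial expansion of $h_d$ to have at most three ``large'' leading binomials $\binom{d+1}{d}+\binom{n_{d-1}}{d-1}+\binom{n_{d-2}}{d-2}$ together with at most two trailing trivial terms $\binom{j}{j}=1$. Setting $\alpha=((h_d)_{(d)})^{-1}_{\pp 0}$, Lemma~\ref{L:20101110-406} gives $((h_d)_{(d)})^1_1=h_d+\alpha$, so any admissible $h_{d+1}$ satisfies $\Delta h_{d+1}\in\{1,\ldots,\alpha\}$. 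For each such value I would compute $(h_{d+1})_{(d+1)}$ via the recursive Macaulay algorithm and verify the hypothesis $((h_{d+1})_{(d+1)})^{-1}_{\pp 0}\le 2\Delta h_{d+1}$ of Proposition~\ref{P:20101110-406}; the restricted shape of $(h_d)_{(d)}$ makes this a finite case check.

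For the borderline sub-cases where Proposition~\ref{P:20101110-406}'s hypothesis is narrowly violated (equivalently, where $\beta_{1,d+2}(I^{\rm lex})>\beta_{2,d+2}(I^{\rm lex})$), I would supplement with the reduction-number argument from Case~3 of Theorem~\ref{thm:4}: assuming $A$ is level, the exact sequence together with Lemma~\ref{L:006} and the BHP estimates yield
$$
0<\dim((I:L)/I)_{d-1}\le 2\beta_{2,d+3}(\Gin(I))\le 2\beta_{2,d+3}(I^{\rm lex}),
$$
and a direct computation of $\beta_{2,d+3}(I^{\rm lex})$ via Lemma~\ref{L:20101104-208}(a) shows that the bound $h_d\le 3d+2$ forces the right-hand side to be too small, producing the required contradiction.

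For part (b), the strategy is to invert Iarrobino's Theorem~\ref{T:20101110-407}. Given the target $\H$, define an auxiliary Hilbert function $\tilde\H$ by $\tilde h_i=H_i-\binom{d+3-i}{2}$ whenever $H_i<\binom{i+2}{2}$, and freely (subject to $\tilde h_i+\binom{d+3-i}{2}\ge\binom{i+2}{2}$) otherwise. Using $h_d\ge 3d+3$, one gets $\tilde h_d=h_d-3\ge 3d$ and, for the choice $h_{d+1}=h_d+1$, also $\tilde h_{d+1}=h_d$ and $\tilde h_{d-1}=h_d-6$, so the top three differences of $\tilde\H$ are all equal to $3$. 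Filling in $\tilde h_i$ for $i<d-1$ by a standard ``triangular'' pattern capped at $3$ makes $\tilde\H$ a differentiable $O$-sequence, which is level by the truncation-of-Gorenstein argument cited in the introduction. Applying Iarrobino's construction to a realization $\tilde A=R/\Ann(\tilde M)$ then produces the desired Artinian level algebra with Hilbert function $\H$. The main obstacle is the boundary sub-case of (a): when Proposition~\ref{P:20101110-406} fails to close the argument, one must adapt the reduction-number/Cancellation-Principle machinery of Case~3 of Theorem~\ref{thm:4} to the regime $\beta_{1,d+2}(I^{\rm lex})>\beta_{2,d+2}(I^{\rm lex})$, carefully tracking $\beta_{2,d+3}(I^{\rm lex})$ through the Macaulay expansion of $h_d$. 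In (b), the technical crux is confirming that the constructed $\tilde\H$ is actually differentiable (hence level); the inequality $h_d\ge 3d+3$ gives precisely the slack needed.
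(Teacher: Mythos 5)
Your part (b) is essentially the paper's argument run in reverse: the paper applies Iarrobino's Theorem~\ref{T:20101110-407} to the differentiable sequence $(1,3,6,\dots,3(d-1)+(\ell-3),3d+(\ell-3),3(d+1)+(\ell-3))$ with $\ell=h_d-3d\ge 3$, which is exactly the auxiliary $\tilde\H$ you describe, so that half is sound. Your part (a) also starts on the right track --- Proposition~\ref{P:20101110-406} does dispose of every case except one. The finite check you propose shows that for $h_d\le 3d+2$ one always has $((h_{d+1})_{(d+1)})^{-1}_{\phantom{-}0}\le 3$ and hence $((h_{d+1})_{(d+1)})^{-1}_{\phantom{-}0}\le 2\Delta h_{d+1}$ whenever $\Delta h_{d+1}\ge 2$, and also when $\Delta h_{d+1}=1$ with $h_d\in\{3d,3d+1\}$. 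The single surviving case is $h_d=3d+2$, $h_{d+1}=3d+3$, where $((3d+3)_{(d+1)})^{-1}_{\phantom{-}0}=3>2=2\Delta h_{d+1}$ and, by Lemma~\ref{L:20101104-208}, $\beta_{1,d+2}(I^{\rm lex})=4>3=\beta_{2,d+2}(I^{\rm lex})$ and $\beta_{2,d+3}(I^{\rm lex})=2$.

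This is where your argument has a genuine gap: the reduction-number inequality you propose to fall back on does not produce a contradiction here. That chain of inequalities yields
$$
0<\dim_k((I:L)/I)_{d-1}\le 2\beta_{2,d+3}(I^{\rm lex})=4,
$$
while Green's theorem bounds the left side by $((h_d)_{(d)})^{-1}_{\phantom{-}0}=3$, so both sides are perfectly compatible --- the right-hand side is not ``too small.'' Moreover, in Case~3 of Theorem~\ref{thm:4} that inequality is only used to verify hypothesis (b) of Lemma~\ref{L:20101104-209}, namely $\beta_{2,d+3}(I^{\rm lex})>0$; the contradiction there comes from Lemma~\ref{L:20101104-209}, whose other hypothesis $\beta_{1,d+2}(I^{\rm lex})=\beta_{2,d+2}(I^{\rm lex})$ fails in the present regime, so the ``adaptation'' you defer to is precisely the missing content. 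The paper closes this case by a different, sharper cancellation argument on the truncation $J=(I_{\le d+1})$: levelness forces all three copies of $R(-(d+2))$ in the last module of the resolution of $J^{\rm lex}$ to cancel against the four in the middle, leaving $a:=\beta_{0,d+2}(J^{\rm lex})\le 1$; then an Eliahou--Kervaire computation (using $\dim_k((J^{\rm lex}:x_3)/J^{\rm lex})_{d+1}=0$ and $x_1^{d+2}\notin\mathcal G(J^{\rm lex})_{d+2}$) shows $b:=\beta_{1,d+3}(J^{\rm lex})\le 1<2=\beta_{2,d+3}(J^{\rm lex})$, so a socle element of $R/J$ (hence of $R/I$) in degree $d$ survives every possible cancellation. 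Without this step --- or some equally concrete replacement --- the case $h_d=3d+2$, $h_{d+1}=3d+3$ remains open and part (a) is not proved.
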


\begin{proof} 
(a) {\be Case 1.} Suppose that $h_d< 3d$. Since 
$$
h_d\leq (3d-1)_{(d)}=\binom {1+d}{d}+\binom {d}{d-1}+\binom{d-2}{d-2}+\cdots+\binom{1}{1}, 
$$
and $h_{d+1}\leq {((h_d)}_{(d)})^{1}_{1}$, we see that $((h_{d+1})_{(d+1)})^{-1}_{\pp0}\leq 2$. Hence, 
$$
((h_{d+1})_{(d+1)})^{-1}_{\pp0}\leq 2\leq 2\Delta h_{d+1}.
$$
Therefore, by Proposition~\ref{P:20101110-406}, $\H$ cannot be a level $O$-sequence.\\

\medskip

\ni
{\be Case 2.} Suppose that $3d\leq h_d\le  3d+2$. If $h_d=3d$, then $h_{d-1}=h_d=3d<\binom{2+d}{d}$. Hence $d\ge 3$ and 
$$
(h_d)_{(d)}=\binom{1+d}{d}+\binom{d}{d-1}+\binom{d-1}{d-2}.
$$
This implies that
$$
h_{d+1}\le ((h_d)_{(d)})^1_1=\binom{2+d}{1+d}+\binom{1+d}{d}+\binom{d}{d-1},
\quad \text{that is,} \quad ((h_{d+1})_{(d+1)})^{-1}_{\phantom{-}0}\le 3. 
$$
By the similar argument as above, we obtain
$$
((h_{d+1})_{(d+1)})^{-1}_{\phantom{-}0}\le 3
$$
for $h_d=3d+1$ or $3d+2$ as well.

If $h_{d+1}\geq h_d+2$, i.e., $\Delta h_{d+1}\geq 2$, then we see that
$$
((h_{d+1})_{(d+1)})^{-1}_{\pp0}\le 3\leq 4\leq 2\Delta h_{d+1},
$$
and thus, by Proposition~\ref{P:20101110-406}, $A$ is not level. 

We now assume that $h_{d+1}=h_d+1$. Then, it follows from Lemma~\ref{L:20101104-208} that 
\[
\begin{array}{|c|c|c|c|}
\hline
 h_d & \quad 3d\quad & 3d+1 & 3d+2 \\
 \hline
 \hline
 \beta_{1, d+2}(I^{\rm lex}) & 3 & 3 & 4 \\
 \hline 
 \beta_{2, d+2}(I^{\rm lex})& 3 & 3 & 3 \\
 \hline
 \beta_{2, d+3}(I^{\rm lex})& 1 & 1 &  2\\
 \hline
\end{array}
\]
By Lemma~\ref{L:20101104-209} it is enough to prove that ${\bf H}$ is not level for the case where 
\[h_d=3d+2 \quad \text{ and }\quad h_{d+1}=3d+3.\] 
Assume that there is an  Artinian level algebra $A=R/I$ with Hilbert function ${\bf H}$. By Lemma~\ref{L:20101104-208}, the Betti diagram of $R/I^{\rm lex}$ is as follows.

\[
\begin{array}{c|lllllllllllllllll}
                              {\rm total} &1 & - & - & - &\\[1ex]
                             \hline
                                  0 &1 & . & . & . &\\[1ex]
                                   \cdots  &   & & \cdots& &\\[1ex]  
                                d-1& .& *& *&   3&\\[1ex]
                                  d  & . & 2 & 4 & 2&\\[1ex]
                                d+1& . &  *& * & * &\\[1ex]
\end{array}
\]

Let $J:=(I_{\le d+1})$. Note that $I^{\rm lex}$ and $J^{\rm lex}$ agree in degree $\le d+1$.  We then rewrite the Betti diagram of $R/J^{\rm lex}$ as follows.

$$
\begin{array}{c|lllllllllllllllll}
                              {\rm total} &1 & - & - & - &\\[1ex]
                             \hline
                                  0 &1 & . & . & . &\\[1ex]
                                   \cdots  &   & & \cdots& &\\[1ex]  
                                d-1& .& *& *&   3&\\[1ex]
                                  d  & . & 2 & 4 & 2&\\[1ex]
                                d+1& . &  a& b & * &\\[1ex]
\end{array}
$$
Since $R/I$ is level and $(I_{\le d+1})$ has no generators in degree $d+2$, we have
$$
0\leq a\leq 1 \quad \text{(by the cancellation principle)}.
$$
 
\medskip

\noindent {\em Case 2-1.} If $a=0$,  then by the result of Eliahou-Kervaire (Theorem~\ref{T:012}), we 
have $b=0$, which means  $R/(I_{\le d+1})$ has a two dimensional socle element
in degree $d$, so does $R/I$. This is a contradiction.

\medskip

\noindent
{\em Case 2-2.} If $a=1$, then $J^{\rm lex}$ has one generator in degree $d+2$.
Define 
$$
h_{d+2}:=\H(R/J^{\rm lex}, d+2).
$$
Then we have 
$$
\begin{array}{ccccccccccccccc}
h_{d+2}=((h_{d+1})_{(d+1)})^1_1 -1=((3d+3)_{(d+1)})^1_1 -1=3d+5, \text{ i.e.,}\\
((h_{d+2})_{(d+2)})^{-1}_{\pp0}=((3d+5)_{(d+2)})^{-1}_{\pp0}=2.
\end{array} 
$$
Hence, from Lemmas~\ref{L:013} and \ref{L:20101104-208} we have 
\begin{equation}\label{EQ:411}
\begin{array}{llllllllllllllllllll}
\dim_k ((J^{\rm lex}:x_3)/J^{\rm lex})_{d+1}
& = & \big|\left\{\, T \in \GG(J^{\rm lex})_{d+2} \,\big|\, x_3 \text{ divides }\, T\, \right\}\big|
\\[1ex]
& = & \beta_{2,d+4}(J^{\rm lex}) \\
& = & h_{d+1}-h_{d+2}+((h_{d+2})_{(d+2)})^{-1}_{\pp0} \\
& = & 0.
\end{array}
\end{equation}
Since $x_1^{d+2}\notin   \GG(J^{\rm lex})_{d+2}$, by  Theorem~\ref{T:012} and equation~\eqref{EQ:411}, we find
$$
b=\beta_{1,d+3}(J^{\rm lex})=\sum_{T\in \GG(J^{\rm lex})_{d+2}}\binom{m(T)-1}{1}=1.
$$
Using the cancellation principle, we know $R/J$  has at least one socle element in degree $d$. Since $R/I$ and $R/J$ agree in degree $\le d+1$, $R/I$ has also a socle element in degree $d$. This is a contradiction.

\medskip

(b) Applying  Theorem~\ref{T:20101110-407} to a differentiable $O$-sequence 
$$
{\bf H'}=(1,3,6,\dots,3(d-1)+(\ell -3),\overset{d\text{-th}}{3d+(\ell-3) }, 3(d+1)+(\ell -3))
$$ 
with $\ell \ge 3$, we obtain an Artinian level $O$-sequence
$$
\begin{array}{llllllllllllllllllllll}
H_{d-1}
& = & \ds \min\bigg\{3(d-1)+(\ell-3)+\binom{4}{2}, \binom{d+1}{2}\bigg\} & = & 3d+\ell, \\[1.5ex] 
H_{d}
& = & \ds \min\bigg\{3d+(\ell-3)+\binom{3}{1}, \binom{d+2}{2}\bigg\} & = & 3d+\ell, 
\quad \text{and} \\[1.5ex]
H_{d+1} 
& = & \ds \min\bigg\{3(d+1)+(\ell -3)+\binom{2}{0}, \binom{d+3}{2}\bigg\} & = & 3d+(\ell +1), 
\end{array}
$$
as we wished. 
\end{proof}

\begin{exmp} \label{EX:20101110-408} Consider a differentiable $O$-sequence $\H'=(1, 3, 6, 10, 15, 21, 28, 36, 45, 55, 58, 61, 64)$, which is an Artinian level $O$-sequence. By Theorem~\ref{T:20101110-407}, we can construct a new level $O$-sequence as follows.
$$
\H= (1, 3, 6, 10, 15, 21, 28, 36, 45, 55, {\mathbf{64}}, {\mathbf{64}}, {\mathbf{65}}),
$$ 
which satisfies the following two conditions
$$
h_{10}=h_{11}<h_{12} \quad \text{and} \quad 6={((65)_{(12)})}^{-1}_{\pp0}>2\Delta h_{12}=2.
$$
\end{exmp}
The above example~\ref{EX:20101110-408} also shows that there is an Artinian level algebra whose Hilbert function satisfies the conditions 
$$
h_{10}=h_{11}<h_{12} \quad \text{and} \quad 64=h_{11}>3d=3\cdot {11}=33.
$$

If we couple our previous work done in \cite{AS} with the results of the previous and this sections, we obtain the following result.

\begin{thm}  Let $R/I$ be an Artinian ring of codimension $3$ with Hilbert function $\H=(h_0,h_1,\dots,h_{d+1})$. Then,
\begin{itemize}
\item[(a)] if $h_{d-1}>h_d=h_{d+1}$ with $h_{d}\le 2d+3$, then $R/I$ is not level,
\item[(b)] if $h_{d-1}>h_d=h_{d+1}$ with $h_{d}\ge 2d+4$, the $R/I$ is  level for some value of $h_{d-1}$,
\item[(c)] if   $h_{d-1}=h_d<h_{d+1}$ with $h_d\le 3d+2$, then $R/I$ is not level,
\item[(d)] if   $h_{d-1}=h_d<h_{d+1}$ with $h_d\ge 3d+3$, then $R/I$ is level for some value of $h_{d+1}$,
\item[(e)] if $R/I$ is level and $\beta_{1,d+2}(I^{\rm lex})=\beta_{2,d+2}(I^{\rm lex})$, then 
\begin{itemize}
 \item[(i)] $h_{d-1}= h_d = h_{d+1}=d+1$, or
 \item[(ii)] $h_{d-1}<h_d<h_{d+1}$ and $\Delta h_{d}= \Delta h_{d+1}$."
 \end{itemize}
 \end{itemize}
\end{thm}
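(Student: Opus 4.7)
The proof will be essentially a bookkeeping exercise, assembling the five bullet points from results that are already in hand. The plan is to match each item with the earlier theorem or corollary that produces it, and to observe that no additional argument is needed beyond citing the right statement.

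First I would handle parts (a) and (b), which are exactly the two halves of the main result of the authors' earlier paper \cite{AS}: the non-level statement for $h_{d-1}>h_d=h_{d+1}$ with $h_d\le 2d+3$, and the existence of a level algebra with the same equality-condition when $h_d\ge 2d+4$. Nothing new is needed; I would simply cite that work (the statement that if $h_{d-1}>h_d=h_{d+1}$ and $h_d\le 2d+3$ then $R/I$ is non-level, and the complementary construction for $h_d\ge 2d+4$). Next, for parts (c) and (d), I would invoke Theorem~\ref{T:20101110-409}: part (a) of that theorem gives (c), and part (b), whose proof uses the inverse-system construction of Theorem~\ref{T:20101110-407} applied to a suitable differentiable $O$-sequence, gives (d).

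For item (e), the hypothesis is $\beta_{1,d+2}(I^{\mathrm{lex}})=\beta_{2,d+2}(I^{\mathrm{lex}})$. If this common value is $0$, then neither (i) nor (ii) is asserted to hold (the hypothesis of (e) is then vacuous in content). If it is positive, then Theorem~\ref{thm:4} shows that necessarily either $h_{d-1}=h_d=h_{d+1}=d+1$ (case (i)) or $h_{d-1}<h_d<h_{d+1}$; in the latter case Corollary~\ref{T:20101110-404} upgrades the chain of strict inequalities to the additional equality $\Delta h_d=\Delta h_{d+1}$, which is exactly (ii). Thus (e) follows by combining these two statements.

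The only real expository choice will be how explicitly to recall the hypotheses of the cited results; I would not reproduce any of the cancellation-principle or Eliahou--Kervaire computations, since each of them is already carried out in Section~3 of this paper or in \cite{AS}. There is no genuine obstacle here beyond ensuring that the ranges of $h_d$ in (a)--(d) are stated so as to cover all cases, and that the hypothesis of positivity in (e) is handled carefully, so that the combined statement is logically complete.
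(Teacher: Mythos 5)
Your proposal is correct and matches the paper exactly: the paper gives no separate proof of this theorem, stating only that it follows by coupling the earlier work in \cite{AS} (for (a) and (b)) with Theorem~\ref{T:20101110-409} (for (c) and (d)) and with Theorem~\ref{thm:4} together with Corollary~\ref{T:20101110-404} (for (e)), which is precisely your assembly. Your observation that the positivity hypothesis $\beta_{1,d+2}(I^{\rm lex})=\beta_{2,d+2}(I^{\rm lex})>0$ is needed for (e) to be non-vacuous is a fair reading of how the cited results are stated.
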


\section*{Acknowledgement}

The authors are truly thankful to the reviewer, whose comments and suggestions enabled us to make substantial improvements in the paper. 

\bibliographystyle{amsalpha}

\end{document}